\documentclass[12pt]{amsart}
\usepackage{amsmath}
\usepackage{mathtools}

\usepackage{amsfonts}
\usepackage{amsthm}
\usepackage{autobreak}
\usepackage[english]{babel}
\usepackage{bold-extra}
\usepackage{hyperref}
\newtheorem{definition}{Definition}

\newtheorem{cor}[definition]{Corollary}

\newtheorem{lemma}[definition]{Lemma}
\newtheorem{theorem}[definition]{Theorem}
\newtheorem*{maintheorem}{Main theorem}
\newtheorem*{lemmaun}{Lemma}

\newtheorem*{notations}{Notations}

\numberwithin{definition}{section}
\allowdisplaybreaks

\def\B{{\mathbb{B}}}
\def\S{{\mathbb{S}}}
\def\D{{\mathbb{D}}}

\def\C{{\mathbb{C}}}

\def\N{{\mathbb{N}}}

\def\T{{\mathbb{T}}}
\def\UU{{\mathcal{U}}}
\def\TT{{\mathcal{T}}}

\def\RR{{\mathcal{R}}}
\def\PP{{\mathcal{P}}}

\def\BB{{\mathcal{B}}}
\def\QQ{{\mathcal{Q}}}

\newcommand{\vp}{\varphi}

\usepackage{multicol}
\usepackage[T1]{fontenc}
\usepackage[utf8]{inputenc}
\usepackage[a4paper]{geometry}
\geometry{verbose,tmargin=2.5cm,bmargin=3cm,lmargin=2.5cm,rmargin=2cm}
\setcounter{secnumdepth}{5}
\usepackage{amssymb,amsmath}
\usepackage{version}
\usepackage{mathtools} 
\usepackage{enumerate}
\usepackage{dsfont}
\usepackage{mathdots}
\makeatletter

\usepackage{latexsym}
\usepackage{color}

\begin{document}
	
\title{Bloch functions with wild boundary behaviour in $\C^N$}

	\author{St\'{e}phane Charpentier, Nicolas Espoullier, Rachid Zarouf}
	
	\address{St\'ephane Charpentier, Aix-Marseille Univ, CNRS, I2M, Marseille, France}
	\email{stephane.charpentier.1@univ-amu.fr}
	
		\address{Nicolas Espoullier, Aix-Marseille Univ, CNRS, I2M, Marseille, France}
	\email{nicolas.espoullier@etu.univ-amu.fr}
	
	\address{Rachid Zarouf, Aix Marseille Univ, Lab ADEF, Campus Univ St Jerome,52 Ave Escadrille Normandie, F-13013 Marseille, France}
	\email{rachid.zarouf@univ-amu.fr}
	
	\address{}
	
	
	\keywords{Bloch function, radial boundary behaviour, universal function}
	\subjclass[2020]{32A40, 32A18, 32A10}

	\begin{abstract}We prove the existence of functions $f$ in the Bloch space of the unit ball $\B_N$ of $\C^N$ with the property that, given any measurable function $\vp$ on the unit sphere $\S_N$, there exists a sequence $(r_n)_n$, $r_n\in (0,1)$, converging to $1$, such that for every $w\in \B_N$,
		\[
		f(r_n(\zeta -w)+w) \to \vp(\zeta)\text{ as }n\to \infty\text{, for almost every }\zeta \in \S_N. 
		\]
	The set of such functions is residual in the little Bloch space. A similar result is obtained for the Bloch space of the polydisc.
	\end{abstract}
	
	\maketitle
	
\section{Introduction and statements of the results}

Our purpose is to show the existence of functions in the Bloch space of the unit ball and of the polydisc of $\C^N$ that carry a quite erratic non-tangential boundary behaviour.

Throughout the paper, we will denote by $\D:=\left\{z\in \C:\, |z|<1\right\}$,
\[
\B_N:=\{z=(z_1,\ldots,z_N)\in \C^N:\, |z|^2:=|z_1|^2+\ldots + |z_N|^2<1\}
\]
and $\D^N:=\D\times \ldots \times \D$ (with $N\geq 1$) the open unit disc of the complex plane $\C$, the open unit ball and the polydisc of $\C^N$, respectively. The notations $\T$, $\S_N$ and $\T^N$ will stand for the unit circle, the unit sphere and the distinguished boundary of $\D^N$. Without possible confusions, we shall indifferently denote by $|\cdot|$ the modulus of a complex number and the euclidean norm in $\C^N$.

Given a domain $D\subset \C^N$, we denote by $H(D)$ the Fréchet space of all functions holomorphic in $\D$, endowed with the topology of locally uniform convergence. For $f\in H(\B_N)$, the radial derivative of $f$ is defined by
\[
\RR(f)=\sum_{k=1}^Nz_k\frac{\partial f}{\partial z_k}.
\]
The Bloch space $\BB(\B_N)$ of $\B_N$ consists in all functions $f$ holomorphic on $\B_N$ such that
\[
\sup_{z\in \B_N}(1-|z|^2)|\RR(f)(z)|<\infty.
\]
Endowed with the norm $\|f\|_{\BB(\B_N)}:= |f(0)| + \sup_{z\in \B_N}(1-|z|^2)|\RR(f)(z)|$, it is a Banach space. In particular, a function $f\in H(\D)$ belongs to the Bloch space $\BB(\D)$ of the unit disc provided that $\sup_{|z|<1}(1-|z|^2)|f'(z)|<\infty$. We then define the Bloch space $\BB(\D^N)$ of the polydisc as the subspace of $H(\D^N)$ consisting of those $f$ for which
\[
\sup_{z\in \D^N}\sum_{k=1}^N(1-|z_k|^2)\left|\frac{\partial f}{\partial z_k}(z)\right|<\infty.
\]
The quantity $\|f\|_{\BB(\D^N)}:= |f(0)| +\sup_{z\in \D^N}\sum_{k=1}^N(1-|z_k|^2)\left|\frac{\partial f}{\partial z_k}(z)\right|$ is a norm that turns $\BB(\D^N)$ into a Banach space.

The little Bloch spaces $\BB_0(\D)$, $\BB_0(\B_N)$ and $\BB_0(\D^N)$ of the unit disc, the unit ball and the polydisc, respectively, are defined as the closure of all polynomials in the corresponding Bloch space. For any $N\geq 1$, $\BB_0(\B_N)$ coincides with the space of all functions $f$ in $H(\B_N)$ such that
\[
\lim_{|z|\to 1^-}(1-|z|^2)|\RR(f)(z)|=0.
\]
It is known that a similar assertion fails for $\BB_0(\D^N)$, $N\geq 2$ \cite{Timoney1980II}. When the context will be clear, we will refer to a function in one of the Bloch spaces introduced above as a \textit{Bloch function}. For equivalent definitions of these spaces and their classical properties, we refer the reader to \cite[Chapter 3]{Zhubook2005} and \cite{Timoney1980I,Timoney1980II}.

The search for Bloch functions with \emph{irregular} radial boundary behaviour is a rather classical topic. In the unit disc, it is rather easy to build a function in $\BB(\D)$ with finite radial limit at no point of $\T$. Indeed, the lacunary series $\sum_k z^{2^k}$ is a Bloch function that has finite radial limits at no points of $\T$, by a tauberian theorem due to Hardy and Littlewood \cite{HardyLittlewood1926}. In passing, by Plessner's theorem, this implies that the image by such functions of almost all radii in $\D$ is dense in $\C$. In the unit ball of $\C^N$, proving the existence of Bloch functions with radial limits at no point of $\S_N$ is far more involved. Ryll and Wojtaszczyk introduced the so-called Ryll-Wojtaszczyk  homogeneous polynomials in order to build a function in $\BB(\B_N)$ with infinite radial limits at \emph{almost} every point of $\S_N$ \cite{RyllWojtaszczyk1983}. Later, using modifications of those polynomials by Aleksandrov \cite{Aleksandrov1986}, Ullrich built a function $f\in \BB(\B_N)$ such that for \emph{every} $\zeta \in S_N$, $|f(r\zeta)|\to \infty$, as $r\to 1$ \cite{Ullrich1988}. Note that, by \cite[Theorem 3]{BagemihlSeidel1961}, for any function $f$ in $\BB(\B_N)$ with finite radial limit at no point of $\S_N$, there exists a dense set of points $\zeta$ in $\S_N$ such that $|f(r\zeta)| \to \infty$ as $r\to 1$.

\medskip

The main contribution of this note is to prove that \emph{quasi-all} functions in $\BB_0(\B_N)$ or $\BB_0(\D^N)$ (where quasi-all should be understood in the sense of Baire category theorem) carry an even more erratic non-tangential boundary behaviour, near \emph{almost every} point of $\S_N$ or $\T^N$. We recall that a subset of a separable Fréchet space is said residual if it contains a dense countable intersection of open sets. Let us denote by $m_N$ the normalised Lebesgue measure on $\S_N$ and by $\sigma_N:=m_1\times\ldots\times m_1$ the normalised Lebesgue measure on $\T^N$.

\begin{maintheorem}Let $(r_n)_n$ be a sequence of real numbers in $(0,1)$, converging to $1$ as $n\to \infty$.
	\begin{enumerate}
		\item There exists a residual subset of $\BB_0(\B_N)$ consisting of functions $f$ that satisfy the following property:
		\[
		(\mathcal{Q}_{\B_N}):\,\begin{array}{l}\text{Given any }m_N\text{-measurable function }\vp\text{ on }\S_N\text{, there exists a sequence }(n_k)_k,\,n_k\in\N,\\
			\text{such that for any }w\in \B_N\text{ and }m_N\text{-a.e. }\zeta \in \S_N\text{, }\\
			\hfill f(r_{n_k}(\zeta-w)+w)\to \vp(\zeta) \text{ as }k\to \infty.\hfill
			
		\end{array}
		\]
		\item There exists a residual subset of $\BB_0(\D_N)$ consisting of functions $f$ that satisfy the following property:
		\[
		(\mathcal{Q}_{\D^N}):\,\begin{array}{l}\text{Given any }\sigma_N\text{-measurable function }\vp\text{ on }\T_N\text{, there exists a sequence }(n_k)_k,\,n_k\in\N,\\
			\text{such that for any }w\in \D_N\text{ and }\sigma_N\text{-a.e. }\zeta \in \T_N\text{, }\\
			\hfill f(r_{n_k}(\zeta-w)+w)\to \vp(\zeta) \text{ as }k\to \infty.\hfill
			
		\end{array}
		\]
	\end{enumerate}
\end{maintheorem}

Similar results were already stated in other spaces of holomorphic functions. For example, Bayart proved (1) where the Bloch space $\BB_0(\B_N)$ is replaced with the Fréchet space $H(\B_N)$, or by any \emph{little growth space} $H_{w,0}^{\infty}$ of the ball  \cite{Bayart2005}. Given a (continuous) weight function $w(t):[0,1)\to (0,+\infty)$ with $w(0^+)=0$, let us recall that the growth space $H_{w}^{\infty}$ consists of all functions $f$ holomorphic in $\B_N$ such that
\[
\sup_{z\in \B_N}w(1-|z|)|f(z)|< \infty.
\]
The space $H_{w,0}^{\infty}$ can be defined as the closure of set of polynomials in $H_{w}^{\infty}$, and it can be checked that it coincides with the set of those $f$ for which $w(1-|z|)|f(z)|\to 0$ as $|z|\to 1$. Since there exist weights $w$ for which $H_{w}^{\infty}$ is contained in all Bergman spaces $A^p$, $p\geq 1$, the space $\cap_{p\geq 1}A^p$ contains a (lot of) function(s) satisfying the property $(\mathcal{Q}_{\B_N})$. In \cite{CharpentierKosinski2021}, an extension of this result to $H(D)$, where $D$ is a pseudoconvex domain of $\C^N$, is obtained. We observe that in the space $H(\D)$, it is known \cite{Charpentier2020} that quasi-all functions satisfy a much stronger property. Finally, we mention that our theorem answers Question 5.7 in \cite{CharpManoMaroI}.

\medskip

Let us now comment on the proof. The above statement typically falls within the theory of universality \cite{BayartGrosseErdmanNestoridisPapadimitropoulos2008,GrosseErdmann1999}, and it is now well-understood that most of results of this kind follow from a Baire category argument  and eventually reduce to a suitable \textit{simultaneous approximation}. The general idea is that, if we are given a (reasonable) separable Fréchet space $X$ continuously embedded in $H(\B_N)$ (the strategy is the same for the polydisc), in order to build a function $f$ in $X$ with some prescribed boundary behaviour $m_N$-a.e. on $\S_N$, it is enough, roughly speaking, to find a function $f$ in $X$ that simultaneously approximates $0$ in $X$ and any given continuous function on some \textit{large} subset of $\S_N$. We point out that simultaneous polynomial approximation in spaces of holomorphic functions is an independent active topic of research. We refer to the seminal thesis work of Khrushchev \cite{Khrushchev1978} and to the recent paper \cite{Limani2024} (see also the references therein).

\medskip

Following this strategy, in order to prove the assertion (1) in our main theorem, we will prove a slightly more precise version of the following simultaneous approximation lemma (see Lemma \ref{lemma-simul-approx-Bloch} ).

\begin{lemmaun}[Simultaneous approximation in the Bloch space of the ball]For any $\varepsilon>0$, any $g\in \BB_0(\B_N)$, and any function $\vp$ continuous on $\S_N$, there exist a measurable set $E\subset \S_N$, with $m_N(E)\geq 1-\varepsilon$, and a polynomial $f$ such that
	\[
	\|f-g\|_{\BB(\B_N)}\leq \varepsilon \quad \text{and} \quad \sup_{z\in E}|f(z)-\vp(z)|\leq \varepsilon.
	\]
\end{lemmaun}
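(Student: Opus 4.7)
The plan is a standard reduction followed by a peaking construction on the sphere, where the crux is to produce peak polynomials with \emph{small} Bloch norm. Since $\BB_0(\B_N)$ is the closure of polynomials in $\BB(\B_N)$, I first pick a polynomial $P$ with $\|P-g\|_{\BB(\B_N)}<\varepsilon/3$; the continuous function $\psi:=(\vp-P)|_{\S_N}$ is then uniformly approximated on $\S_N$ to within $\varepsilon/3$ by a polynomial $Q$ in $z_j,\bar z_j$ via Stone--Weierstrass. The task reduces to constructing a holomorphic polynomial $R$ with
\[
\|R\|_{\BB(\B_N)}<\varepsilon/3\qquad\text{and}\qquad |R-Q|<\varepsilon/3\text{ on }E\subset\S_N,\ m_N(E)\geq 1-\varepsilon,
\]
after which $f:=P+R$ satisfies the conclusion by the triangle inequality.

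To build $R$ I would partition $\S_N$ into finitely many non-isotropic (Kor\'anyi) balls $B_1,\dots,B_m$ on each of which $Q$ stays within $\varepsilon/12$ of the constant $c_j:=Q(\zeta_j)$, where $\zeta_j$ is the centre of $B_j$, and set $R:=\sum_{j=1}^m c_j W_j$. Each $W_j$ is to be a polynomial peaking on $B_j$, in the sense that $|W_j-1|<\eta$ on a subset of $B_j$ of relative measure $\geq 1-\eta$ and $|W_j|<\eta$ on $\S_N\setminus B_j$, while having small Bloch norm; the set $E$ is then $\S_N$ minus the small exceptional pieces. The Aleksandrov--Ullrich construction built on Ryll--Wojtaszczyk homogeneous polynomials provides such localised polynomials, but only with $\|W_j\|_{\BB(\B_N)}=O(1)$.

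The main obstacle will be forcing $\|W_j\|_{\BB(\B_N)}$ to be small. My attempt would be a lacunary average $W_j:=L^{-1}\sum_{k=1}^{L}V_{j,k}$, where each $V_{j,k}$ is an Aleksandrov-type peaking polynomial at $\zeta_j$ (with $V_{j,k}(\zeta_j)=1$ and $\|V_{j,k}\|_\infty=O(1)$), obtained as a high power of a homogeneous polynomial of degree $n_k$, the sequence $(n_k)_k$ being lacunary. The lacunarity of the degrees should imply that at any $z\in\B_N$ only the index $k$ with $n_k\simeq 1/(1-|z|)$ contributes appreciably to $(1-|z|^2)|\RR(W_j)(z)|$, yielding $\|W_j\|_{\BB(\B_N)}=O(1/L)$, while the peaking of the average on $B_j$ follows termwise from the peaking of the $V_{j,k}$. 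Taking $L$ large enough beats the factor $\sum_{j}|c_j|$ in $\|R\|_{\BB(\B_N)}\leq\sum_{j}|c_j|\,\|W_j\|_{\BB(\B_N)}$ and forces $\|R\|_{\BB(\B_N)}<\varepsilon/3$. Carrying out this lacunary construction quantitatively against the non-isotropic geometry of $\S_N$, and in particular choosing the peaking polynomials $V_{j,k}$ so that their supports on $\S_N$ behave independently of the scale $n_k$, is where I expect the real technical work to lie.
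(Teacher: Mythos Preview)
Your reduction to $g=0$ and the Stone--Weierstrass step are fine, and the lacunary averaging idea is sound in the following sense: if each $V_{j,k}$ is \emph{homogeneous} of degree $n_k$ with $\|V_{j,k}\|_{\infty,\S_N}\le 1$ and $(n_k)_k$ is lacunary, then indeed $(1-|z|^2)\bigl|\RR\bigl(\sum_k V_{j,k}\bigr)(z)\bigr|\le (1-|z|^2)\sum_k n_k|z|^{n_k}=O(1)$ uniformly in $L$, so the average has Bloch seminorm $O(1/L)$.

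The genuine gap is exactly the one you flag but do not resolve: producing homogeneous polynomials $V_{j,k}$ of degree $n_k\to\infty$ that simultaneously (a) satisfy $|V_{j,k}-1|<\eta$ on a subset of the \emph{fixed} Kor\'anyi ball $B_j$ of relative measure $\ge 1-\eta$, and (b) satisfy $|V_{j,k}|<\eta$ on $\S_N\setminus B_j$. These two requirements are in tension. The obvious candidate $\langle z,\zeta_j\rangle^{n_k}$ achieves (b) for large $n_k$ but its set $\{|V_{j,k}-1|<\eta\}$ has measure $\asymp n_k^{-N}$, so (a) fails. Ryll--Wojtaszczyk polynomials give $|W_n|\ge c$ on a set of measure $\ge c$ independent of $n$, but that set is spread over all of $\S_N$ and cannot be confined to a prescribed $B_j$; in fact any localization by multiplying with a holomorphic cutoff destroys homogeneity and with it your Bloch estimate. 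I do not know of a construction satisfying (a) and (b) together, and I would not expect one to be easier than the inner-function route below.

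The paper takes a completely different path that sidesteps this localization problem. It does not partition $\S_N$ at all. Instead it (i) uses a polynomially convex compact $F\subset\S_N$ of measure $\ge 1-\varepsilon/4$ and Oka--Weil to get a polynomial $Q$ with $|Q-\vp|<\varepsilon/2$ on $F$ and a polynomial $P$ with $P(0)\approx 0$, $P\approx 1$ on $F$; (ii) invokes a result of Doubtsov (built on Aleksandrov--Anderson--Nicolau) producing a non-constant inner function $I:\B_N\to\D$ with $(1-|z|^2)|\RR I(z)|/(1-|I(z)|^2)<\eta$ for any prescribed $\eta>0$; (iii) sets $J(z)=I(z)z$, an inner self-map of $\B_N$, and $f=Q\cdot(P\circ J)$. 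The chain rule together with the smallness of $\eta$ gives $\|P\circ J\|_{\BB(\B_N)}\lesssim_P\eta$, and since polynomials multiply $\BB_0(\B_N)$ boundedly this yields $\|f\|_{\BB(\B_N)}<\varepsilon$. On $E:=J^{-1}(F)\cap F$ one has $|f-\vp|<\varepsilon$, and Loewner's lemma for the inner map $J$ gives $m_N(J^{-1}(F))=m_N(F)$, hence $m_N(E)\ge 1-\varepsilon$. The smallness of the Bloch norm is thus obtained from the existence of inner functions with small hyperbolic derivative, not from any cancellation in a polynomial sum; this is what makes the argument go through cleanly for $N\ge 2$.
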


A similar statement will be proved for $\D^N$ (see Lemma \ref{lemma-simul-approx-Bloch-polydisc}). For any holomorphic function space $X$ on $\B_N$, that contains polynomials, if the set of all polynomials is dense in $X$ and contained in the pointwise multipliers of $X$, then proving the above lemma reduces to the case where $g=0$ and $\vp =1$. Now, a general approach to simultaneously approximate $0$ in $X$ and $1$ on a \textit{large} subset of $\S_N$ consists in selecting the approximant $f$ as equal to $P\circ I$, where $P$ is a polynomial (usually given by Oka-Weyl theorem - or Runge's theorem in $\C$) that simultaneously approximate uniformly $1$ on arbitrarily \textit{large} proper compact subset of $\S_N$ and $0$ in a compact subset of $\B_N$, and where $I$ is an inner mapping chosen so that $P\circ I$ is small in $X$ norm, and so that it \textit{transports} the approximation property of $P$ on (another) \textit{large} subset of $\S_N$. For example, in \cite{Bayart2005}, the author made use of a similar simultaneous approximation result for $H_{w,0}^{\infty}$, that was due to Iordan \cite{Iordan1989}. Therein, the polynomial $P$ is chosen using a Mergelyan's type result for the ball \cite{HakimSibony1987}, and the inner mapping is $z\Phi^k(z)$, $z\in\B_N$, where $\phi$ is any inner function vanishing at $0$, and the power $k$ large enough (depending on the weight $w$).

This approach is followed by Limani \cite[Section 4.2]{Limani2024}, who basically proves the above lemma for $N=1$. The main difficulty consists in showing the existence of an inner function $I$ such that $\|P\circ I\|_{\BB}$ is arbitrarily small, which essentially reduces to the existence of an inner function $I$ with arbitrarily small Bloch norm. It turns out that this can be achieved using a deep result of Aleksandrov, Anderson and Nicolau \cite{AAN1999}. We mention that this allows Limani to prove the existence of so-called Menshov universal functions in the Bloch space of $\D$, answering a question posed in \cite{BeiseMuller2016}. We also refer to \cite{Khrushchev2020,Khrushchev2023} for results of a similar nature in other Banach spaces of holomorphic functions on $\D$.

In order to prove our simultaneous approximation lemma for the Bloch space of the ball, a first difficulty is that one does not have a proper Mergelyan's theorem, and the second one is that inner functions are usually much more difficult to construct. However, a combination of simple geometric ideas, that served already in \cite{CharpentierKosinski2021}, and a partial generalisation of Aleksandrov-Anderson-Nicolau's result, due to Doubtsov \cite{Doubtsov2000}, will allow us to obtain the main theorem.

Let us say a few words on the case of the polydisc, which slightly differs from the case of the ball ($N\geq 2$). Indeed, the only mutlipliers of $\BB_0(\D^N)$ are the constant functions \cite[Corollary 3.6]{AllenColonna2009} and thus we cannot directly follow the strategy described for $\B_N$. However, this can be overcome working in the unit disc and building our approximating polynomials as a product of polynomials of one variable.

\medskip

The paper is organised as follows: the second section is dedicated to the results of simultaneous approximation in the Bloch space of the ball and the polydisc. The proof of our main theorem is presented in Section 3.  A final section is devoted to commenting briefly on possible extensions and variations.

\begin{notations}{\rm The notation $A\lesssim_P B$ shall be used when there exists a positive constant $C$, depending on the parameter $P$, such that $A\leq CB$. If the constant $C$ does not depend on any parameter relevant in the context, we shall simply use the symbol $\lesssim$.}
\end{notations}

\section{Simultaneous approximation in the Bloch space}

For any compact set $E$ in $\C^N$, we will denote by $C(E)$ the Banach space of all continuous functions on $E$, endowed with the supremum norm, denoted by $\|\cdot\|_{\infty,E}$.

\subsection{Simultaneous approximation in the Bloch space of the ball}\label{Section-lem-ball}


In this part, our aim is to prove the following approximation lemma.

\begin{lemma}\label{lemma-simul-approx-Bloch}Let $\varepsilon>0$, let $L$ be a compact subset of $\B_N$, let $g\in \BB_0(\B_N)$, and let $\vp \in C(\S_N)$. There exist a set $E\subset \S_N$ with $m_N(E)\geq 1-\varepsilon$ and a polynomial $f\in \BB_0(\B_N)$, such that
	\begin{enumerate}[(i)]
		\item $\|f-g\|_{\BB}< \varepsilon$;
		\item $\|f-\vp\|_{\infty,E}< \varepsilon$.
	\end{enumerate}
\end{lemma}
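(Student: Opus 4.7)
My approach is to reduce the statement to the special case $g=0$, $\vp=1$, and then to treat that model case by composing a one-variable polynomial with a polynomial approximation of an inner function of arbitrarily small Bloch norm. For the reduction, using the density of holomorphic polynomials in $\BB_0(\B_N)$, I would first pick a polynomial $\tilde g$ with $\|g-\tilde g\|_{\BB(\B_N)}<\varepsilon/3$. Applying a Mergelyan-type approximation on the sphere (in the spirit of \cite{HakimSibony1987}), I would then select a compact subset $E_0\subset \S_N$ with $m_N(E_0)\geq 1-\varepsilon/2$ and a holomorphic polynomial $\tilde\vp$ with $\|\tilde\vp-\vp\|_{\infty,E_0}<\varepsilon/3$. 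The product rule $\RR(ph)=\RR(p)\,h+p\,\RR(h)$ gives the multiplier inequality $\|ph\|_{\BB(\B_N)}\lesssim \|p\|_{\infty,\overline{\B_N}}\|h\|_{\BB(\B_N)}+\|h\|_{\infty,\overline{\B_N}}\|p\|_{\BB(\B_N)}$ for polynomials $p$, which reduces the problem to producing a polynomial $h$ whose Bloch norm and deviation from $1$ on some $E_1\subset \S_N$ with $m_N(E_1)\geq 1-\varepsilon/2$ are both as small as desired, while $\|h\|_{\infty,\overline{\B_N}}$ stays bounded. The function $f$ is then $\tilde g+(\tilde\vp-\tilde g)h$, with $E=E_0\cap E_1$.

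To construct $h$, I look for it of the form $P\circ \Psi$, where $P$ is a one-variable holomorphic polynomial and $\Psi$ is a polynomial on $\B_N$ approximating a well-chosen inner function. Using Runge's theorem on $\overline\D$, I would fix a closed arc $K\subset \T$ of normalised length $\geq 1-\varepsilon/4$ and choose a polynomial $P$ with $\|P-1\|_{\infty,K}<\eta$ for some small $\eta>0$, and with $\|P\|_{\infty,\overline\D}$ and $\|P'\|_{\infty,\overline\D}$ controlled by a constant depending only on $K$. For the inner ingredient, Doubtsov's theorem \cite{Doubtsov2000}, which extends the Aleksandrov--Anderson--Nicolau construction \cite{AAN1999} to the ball, yields for every $\delta>0$ an inner function $I:\B_N\to\D$ with $I(0)=0$ and $\|I\|_{\BB(\B_N)}\leq \delta$. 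The chain rule $\RR(P\circ I)=P'(I)\,\RR(I)$ then gives $\|P\circ I\|_{\BB(\B_N)}\leq \|P'\|_{\infty,\overline\D}\,\delta$, which can be made arbitrarily small. Moreover, Aleksandrov's theorem (using $I(0)=0$) ensures that the radial boundary map of $I$ pushes $m_N$ forward to the normalised Lebesgue measure on $\T$, so that the set of $\zeta\in \S_N$ whose radial limit $I^*(\zeta)$ lies in $K$ has $m_N$-measure $\geq 1-\varepsilon/4$, and on this set $|P\circ I^*-1|<\eta$.

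To turn $P\circ I$ into an actual polynomial, I would replace $I$ by its dilate $J_r(z):=I(rz)$ for $r<1$ close to $1$. The function $J_r$ is holomorphic in a neighbourhood of $\overline{\B_N}$, hence lies in $\BB_0(\B_N)$, and a direct computation yields $\RR(J_r)(z)=(\RR I)(rz)$, so that $\|J_r\|_{\BB(\B_N)}\leq \|I\|_{\BB(\B_N)}$. Combining the existence of radial limits of $I$ with Egorov's theorem, I would pick $r$ so that $J_r$ is uniformly close to $I^*$ on a subset of the set above of measure $\geq 1-\varepsilon/2$, whence $|P\circ J_r-1|$ stays close to $\eta$ on that subset. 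Since $J_r\in \BB_0(\B_N)$, its Taylor polynomials $\Psi_n$ converge to $J_r$ both in $\BB(\B_N)$ and uniformly on $\overline{\B_N}$; for $n$ large enough, $h:=P\circ \Psi_n$ is then the desired polynomial.

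The main obstacle is undoubtedly the construction of an inner function of $\B_N$ with arbitrarily small Bloch norm: in one variable this is the deep theorem of Aleksandrov--Anderson--Nicolau, and in several variables one must rely on Doubtsov's partial extension. A secondary, more technical, difficulty is that inner functions do not themselves belong to $\BB_0(\B_N)$, so they cannot be approximated directly in the Bloch norm by polynomials; this is resolved by the dilation step $J_r(z)=I(rz)$, which transports the problem inside $\BB_0(\B_N)$ while preserving both the smallness of the Bloch norm and, for $r$ close to $1$, the closeness to $1$ on the set of large measure.
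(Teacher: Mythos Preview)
Your outline shares the essential ingredient with the paper's proof---an inner function on $\B_N$ with arbitrarily small Bloch norm, obtained by combining Doubtsov's construction with \cite{AAN1999}---and the final dilation/Taylor step to pass to a genuine polynomial is handled the same way. The structural difference is that the paper composes a \emph{several-variable} polynomial $P$ with the inner self-map $J(z)=I(z)z:\B_N\to\B_N$ (obtained via Oka--Weil on a polynomially convex set $F\cup\{0\}\subset\S_N\cup\{0\}$, using the geometric Lemma~\ref{lem-geo}), and then invokes L\"owner's lemma for the measure of $J^{-1}(F)$; you instead compose a \emph{one-variable} polynomial $P$ with $I:\B_N\to\D$ and use the push-forward property $I^*_\#m_N=m_1$. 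Both routes work and are essentially equivalent in difficulty; yours avoids the polynomial convexity lemma, while the paper's avoids the Egorov/radial-limit step.

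There is, however, a genuine gap in your reduction. The inequality you write, $\|ph\|_{\BB}\lesssim \|p\|_\infty\|h\|_\BB+\|h\|_\infty\|p\|_\BB$, does \emph{not} make $\|(\tilde\vp-\tilde g)h\|_\BB$ small: with $p=\tilde\vp-\tilde g$ fixed and $\|h\|_\infty$ merely bounded, the second term $\|h\|_\infty\|p\|_\BB$ is a fixed positive constant. What is needed is the sharper multiplier fact for polynomials (as in \cite[Theorem~3.21]{Zhubook2005}, used in the paper): since $(1-|z|^2)\log\tfrac{e}{1-|z|^2}|\RR p(z)|$ is bounded for a polynomial $p$, one has $\|ph\|_\BB\le C_p\|h\|_\BB$, which \emph{does} give smallness once $\|h\|_\BB$ is small. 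This leads to a second, related omission: your claim $\|P\circ I\|_\BB\le \|P'\|_{\infty,\overline\D}\,\delta$ forgets the term $|P(I(0))|=|P(0)|$, which you never arrange to be small. The fix is immediate---apply Runge on $K\cup\{0\}$ (polynomially convex, since $K\subsetneq\T$) to get $P$ with $|P(0)|$ small as well, exactly as the paper does in \eqref{eq-lem-P}---but as written the argument does not close.
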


Since the polynomials are dense in $\BB_0(\B_N)$, it is enough to prove the lemma for $g=0$, upon replacing $\vp$ by $\vp - Q$, with $Q$ close to $g$ in $\BB_0(\B_N)$. The proof is based on two results. The first one is geometric and follows immediately from \cite[Lemma 3.6]{CharpentierKosinski2021}, for e.g.

\begin{lemma}\label{lem-geo}Let $\varepsilon>0$ and let $U$ be a domain in $\C^N$ such that $U\cap \S_N \neq \emptyset$. There exists a measurable subset $E$ of $U\cap\S_N$ such that:
	\begin{enumerate}[(i)]
		\item $m_N(U\cap \S_N \setminus E)< \varepsilon$;
		\item for any compact set $L\subset \B_N$, the set $E \cup L$ is polynomially convex.
	\end{enumerate}
\end{lemma}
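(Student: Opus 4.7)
My plan is to combine a measure-theoretic regularization with a geometric polynomial-convexity construction, essentially following the strategy of \cite{CharpentierKosinski2021}.

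First, by inner regularity of $m_N$, I would extract a compact set $K \subseteq U \cap \S_N$ with $m_N((U\cap\S_N)\setminus K) < \varepsilon/2$. Any compact set $E$ with $K\subseteq E \subseteq U\cap\S_N$ then automatically satisfies (i) up to $\varepsilon$.

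Next, I produce local polynomial-convexity data. For each $\zeta \in K$, consider the linear polynomial $q_\zeta(z) := \langle z,\zeta\rangle$, which attains its supremum modulus $1$ on $\overline{\B_N}$ only at $\zeta$. For $\delta_\zeta>0$ small enough, the lens neighbourhood $V_\zeta := \{z \in \C^N : |q_\zeta(z)-1|<\delta_\zeta\}$ is contained in $U$. By compactness of $K$, finitely many such $V_{j}$ corresponding to points $\zeta_1,\dots,\zeta_M\in K$ cover $K$, and I would define $E$ as a suitable compact subset of $\bigcup_{j=1}^{M} V_{j}\cap\S_N$ containing $K$, with $m_N((U\cap\S_N)\setminus E)<\varepsilon$.

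The crux is (ii). I may replace $L$ by its polynomial hull $\widehat{L}$, which is still compact in $\B_N$: if $|z|\le r<1$ on $L$, then the polynomials $z\mapsto \langle z,a\rangle$, $|a|=1$, bound $|z|$ by $r$ on $\widehat{L}$ as well. So I may assume $L$ is polynomially convex. The target is $\widehat{E\cup L}\subseteq E\cup L$. Any point $z_0\in\widehat{E\cup L}$ lies in $\overline{\B_N}$ by the same argument. Using the linear peak polynomials $q_{\zeta_j}$, one localizes $z_0$ near the union of lens pieces; inside each piece, one separates $z_0$ either towards $E$, via powers and translates of $q_{\zeta_j}$ behaving as local peak functions, or towards $L$, via the polynomial separating $L$ from the relevant interior point. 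Either way one produces a polynomial whose supremum modulus on $E\cup L$ is strictly smaller than its value at $z_0$, contradicting $z_0\in\widehat{E\cup L}$ unless $z_0\in E\cup L$.

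The main obstacle is precisely this polynomial-convexity step: naive choices such as spherical caps $\{\operatorname{Re}\langle z,\zeta\rangle\ge 1-\delta\}\cap\S_N$ are \emph{not} themselves polynomially convex, their hulls extending along the inward normal into $\B_N$. The construction of \cite{CharpentierKosinski2021} designs the pieces $V_j$ so that the peak polynomials $q_{\zeta_j}$, combined with polynomials separating $L$, together prevent the hull from creeping into $\B_N\setminus L$. This geometric substitute for a (missing) Mergelyan-type theorem in $\B_N$ is the real content of the lemma.
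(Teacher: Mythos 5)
The paper does not actually reprove this lemma: it is imported verbatim from \cite[Lemma 3.6]{CharpentierKosinski2021}, so the only thing a self-contained proposal could contribute is a genuine construction of $E$ together with a proof of (ii) --- and that is exactly what is missing from your write-up. Your preliminary steps are fine: inner regularity handles (i), and replacing $L$ by $\widehat{L}$ is legitimate (indeed it shows that (ii) can only be true, and is only meant, for polynomially convex $L$, e.g.\ $L=\{0\}$ as used in the proof of Lemma \ref{lemma-simul-approx-Bloch}). But the crux is entirely deferred: ``define $E$ as a suitable compact subset of $\bigcup_j V_j\cap\S_N$ containing $K$'' and ``one separates $z_0$ either towards $E$ \ldots or towards $L$'' assume precisely what has to be proven, and your closing paragraph concedes that the ``real content'' lives in the construction of \cite{CharpentierKosinski2021}.

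Moreover, the separation mechanism you sketch cannot work for $N\ge 2$. The functional $q_{\zeta_j}(z)=\langle z,\zeta_j\rangle$ peaks only at the single point $\zeta_j$, and no polynomial whatsoever can separate an interior point of an analytic disc from its boundary circle. Concretely, fix a unit vector $v\perp\zeta_j$; the circle $\{(1-\delta)\zeta_j+\lambda v:\ |\lambda|=\sqrt{2\delta-\delta^2}\}$ lies in the cap $\{z\in\S_N:\ \mathrm{Re}\,\langle z,\zeta_j\rangle\ge 1-\delta\}$, and by the maximum principle its polynomial hull is the closed disc $\{(1-\delta)\zeta_j+\lambda v:\ |\lambda|\le\sqrt{2\delta-\delta^2}\}$, whose interior lies in $\B_N$. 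Hence any $E$ containing a spherical cap --- and nothing in your scheme prevents this; the inner-regularity compact $K$ you insist on keeping inside $E$ may itself contain a cap --- satisfies $\widehat{E\cup\{0\}}\supsetneq E\cup\{0\}$, so (ii) fails no matter which powers or translates of $q_{\zeta_j}$ one tries. The set $E$ must therefore be a carefully built, nowhere dense (in $\S_N$) compact set of nearly full measure whose union with compacta of the ball has trivial hull; producing such a set is the whole point of \cite[Lemma 3.6]{CharpentierKosinski2021}, and as written your proposal is a plausibility sketch around that citation rather than a proof.
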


The second one deals with the existence of inner functions in the ball with a specific behaviour at the boundary. It is a combination of two results by Aleksandrov-Anderson-Nicolau \cite{AAN1999} and Doubtsov \cite{Doubtsov2000}.

\begin{theorem}\label{thm-inner-function-ball}Let $\eta >0$ be fixed. There exists a non-constant inner function $I:\B_N\to \C$ such that
	\[
	\frac{(1-|z|^2)|\RR I(z)|}{1-|I(z)|^2}< \eta,\quad z\in \B_N.
	\]
In particular, $\|I\|_{\BB}< \eta$.
\end{theorem}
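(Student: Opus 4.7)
The plan is to construct $I$ by composition, combining the Aleksandrov-Anderson-Nicolau result on the disc with the existence of suitable inner self-maps of $\B_N$ furnished by Aleksandrov-Low and refined by Doubtsov. Given $\eta>0$, set $\eta_0:=\eta/2$. By AAN, pick a non-constant inner function $J:\D\to\D$ with
\[
\sup_{w\in\D}\frac{(1-|w|^2)|J'(w)|}{1-|J(w)|^2}<\eta_0,
\]
and, after composing $J$ with a Möbius automorphism of $\D$ (which preserves both innerness and the above displayed quantity), arrange that $J(0)=0$. Next, by the Aleksandrov-Low theorem (with Doubtsov's refinement) pick a non-constant inner function $\phi:\B_N\to\D$ with $\phi(0)=0$; a classical fact is that for such $\phi$ the distribution on $\T$ of the boundary values $\phi^*$ is the normalised Lebesgue measure. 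Define $I:=J\circ\phi$. Since $m$-a.e. $w\in\T$ satisfies $|J^*(w)|=1$ and the push-forward of $m_N$ by $\phi^*$ equals Lebesgue measure on $\T$, we conclude $|I^*(\zeta)|=|J(\phi^*(\zeta))|=1$ for $m_N$-a.e. $\zeta\in\S_N$, so $I$ is non-constant inner on $\B_N$ with $I(0)=0$.

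For the pointwise bound, the chain rule gives $\RR I(z)=J'(\phi(z))\,\RR\phi(z)$, hence
\[
\frac{(1-|z|^2)|\RR I(z)|}{1-|I(z)|^2}=\frac{|J'(\phi(z))|}{1-|J(\phi(z))|^2}\cdot(1-|z|^2)|\RR\phi(z)|.
\]
The first factor is strictly smaller than $\eta_0/(1-|\phi(z)|^2)$ by the AAN estimate applied at $w=\phi(z)$. For the second factor I use a slice argument: for $z\neq 0$, apply the classical one-variable Schwarz-Pick lemma to the disc map $\psi_z(t):=\phi(tz/|z|)$ at $t=|z|$, which yields
\[
(1-|z|^2)|\RR\phi(z)|=|z|(1-|z|^2)|\psi_z'(|z|)|\leq 1-|\phi(z)|^2.
\]
Combining gives $(1-|z|^2)|\RR I(z)|/(1-|I(z)|^2)<\eta_0<\eta$. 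The ``in particular'' claim follows because $1-|I(z)|^2\leq 1$ and $I(0)=0$, so $\|I\|_{\BB}=\sup_{z}(1-|z|^2)|\RR I(z)|<\eta$.

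The main obstacle is not the derivative estimate, which is a routine chain-rule-plus-Schwarz-Pick computation, but rather ensuring that $I=J\circ\phi$ is genuinely inner on $\B_N$. The $m$-null set on $\T$ where $J^*$ fails to be unimodular could, in principle, be hit by $\phi^*$ on a set of positive $m_N$-measure in $\S_N$; this is ruled out precisely by the fact that the boundary distribution of $\phi^*$ equals Lebesgue measure on $\T$, which is the Aleksandrov-Low/Doubtsov ingredient. Once this subtle measure-theoretic point is settled, the estimate follows mechanically from the AAN bound in the disc transported to the ball via the slice Schwarz-Pick inequality.
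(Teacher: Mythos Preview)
Your proof is correct, and the overall scheme coincides with the paper's: build $I$ as the composition of an Aleksandrov--Anderson--Nicolau inner function on $\D$ with an inner function $\B_N\to\D$, then apply the chain rule. The difference lies in how the second ingredient is handled. The paper invokes Doubtsov's construction of a specific singular inner function $\Theta$ on $\B_N$ for which $(1-|z|^2)|\RR\Theta(z)|/(1-|\Theta(z)|^2)\to 0$ as $|z|\to 1$, and only uses that this quantity is therefore bounded by some finite $M$; it then applies AAN with parameter $\eta/(2M)$. You bypass Doubtsov entirely by observing, via the one-variable Schwarz--Pick lemma on complex lines through the origin, that \emph{every} holomorphic map $\phi:\B_N\to\D$ automatically satisfies $(1-|z|^2)|\RR\phi(z)|\leq 1-|\phi(z)|^2$; thus $M=1$ works for any inner $\phi$ whatsoever, and only the bare Aleksandrov--L{\o}w existence of non-constant inner functions on $\B_N$ is needed. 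This is a genuine simplification of the paper's argument. (Your citation of ``Doubtsov's refinement'' is therefore superfluous in your own proof.) You also justify more explicitly than the paper why the composition $J\circ\phi$ is inner; note that the pointwise identity $(J\circ\phi)^*=J^*\circ\phi^*$ is delicate since $\phi(r\zeta)$ need not approach $\phi^*(\zeta)$ non-tangentially, but your push-forward argument avoids this issue and suffices to conclude $|(J\circ\phi)^*|=1$ $m_N$-a.e.
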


\begin{proof}Let us fix $\eta >0$. Let $\mu$ be a pluriharmonic probability measure such that the slices $\mu_{\zeta}$ are uniformly symmetric and singular for all $\zeta \in \S_N$. Such measures exist, by Corollary 2.3 in \cite{Doubtsov2000}. Then define the non-constant singular inner function
	\[
	\Theta(z)=\exp \left(-\int_{\S_N}\left(\frac{2}{(1-\left<z,\zeta\right>)^n}-1\right)d\mu(\zeta)\right).
	\]
Then, again by \cite{Doubtsov2000}, one has
\[
\lim_{|z|\to 1}	\frac{(1-|z|^2)|\RR \Theta(z)|}{1-|\Theta(z)|^2}=0.
\]
In particular, there exists a positive constant $M<\infty$ such that
\[
\frac{(1-|z|^2)|\RR \Theta(z)|}{1-|\Theta(z)|^2}\leq M,\quad z\in \B_N.
\]
By Theorem 1 in \cite{AAN1999} applied with $\phi(t)=\eta t/(2M)$, there exists an inner function $g:\D\to \D$ such that
\[
\frac{(1-|w|^2)|g'(w)|}{1-|g(w)|^2} \leq \frac{\eta}{2M}, \quad w\in \D.
\]
Let us set $I=g\circ \Theta$. Then $I$ is a non-constant inner function on $\B_N$ such that, for any $z\in \B_N$,
\[
\frac{(1-|z|^2)|\RR I(z)|}{1-|I(z)|^2} = \frac{(1-|z|^2)|g'(\Theta(z))||\RR \Theta(z)|}{1-|g(\Theta(z))|^2}\leq \frac{\eta}{2M} \frac{(1-|z|^2)|\RR \Theta(z)|}{1-|\Theta(z)|^2} < \eta.
\]
\end{proof}

We are now ready to prove Lemma \ref{lemma-simul-approx-Bloch}.

\begin{proof}[Proof of Lemma \ref{lemma-simul-approx-Bloch} with $g=0$] Let $\varepsilon>0$, let $L$ be a compact subset of $\B_N$, and let $\vp \in C(\S_N)$.
	
First, by uniform continuity of $\vp$, there exist open domains $U_1,\ldots,U_l$ in $\C^N$, pairwise disjoint, with $U_j\cap \S_N\neq \emptyset$ for any $j$, and complex numbers $c_1,\ldots, c_l$, such that
\begin{enumerate}[(a)]
	\item $m_N(\S_N\setminus (\overline{U_1}\cup\ldots\cup \overline{U_l}))\leq \varepsilon/5$;
	\item $|\vp(z)-c_k|\leq \varepsilon/2$ for any $z\in \overline{U_k}\cap \S_N$ and any $k\in \{1,\ldots,l\}$.
\end{enumerate}

Now, applying Lemma \ref{lem-geo} to each $U_k$, we choose a compact set $F\subset \S_N\cap(U_1\cup\ldots\cup U_l)$ with  $\sigma(F)\geq 1-\varepsilon/4$, such that $F\ \cup \{0\}$ is polynomially convex. Since the continuous function equal to $0$ at $0$ and to $c_k$ on $U_k$, $k\in \{1,\ldots,l\}$, extends holomorphically to an open neighborhood of $\{0\}\cup \overline{U_1}\cup\ldots\cup \overline{U_l}$, the Oka-Weyl theorem (see \cite{Stoutbook2007}, e.g.) ensures the existence of a polynomial $Q$ such that
\[
|Q(z)-\vp(z)| < \varepsilon/2,\quad z\in F.
\]

Let also $P$ be a polynomial such that
\begin{equation}\label{eq-lem-P}
|P(0)|< \frac{\varepsilon}{2\|Q\|_{\infty,\B_N}}\quad \text{and}\quad |P(z)-1|< \frac{\varepsilon}{2\|Q\|_{\infty,\B_N}}, z\in F.
\end{equation}
	
Let now $I$ be given by Theorem \ref{thm-inner-function-ball} for some $\eta>0$ whose value will be fixed later. We define the mapping $J:z=(z_1,\ldots,z_n) \mapsto I(z)z= (z_1I(z),\ldots,z_nI(z))$, $z\in \B_N$. It is clear that $J$ is an inner mapping from $\B_N$ to $\B_N$ that vanishes at $0$. Now, a simple calculation shows that
	\[
	\RR(P\circ J)(z)= \left(\RR(I)(z) + I(z)\right)\sum_{i=1}^nz_i\frac{\partial P}{\partial z_i}(J(z)),
	\]
Hence, since $P$ is a polynomial, we get for any $z\in \B_N$,
	\begin{eqnarray*}
	|\RR(P\circ J)(z)| & \leq & \sum_{i=1}^n\left|\frac{\partial P}{\partial z_i}(J(z))\right|\left(|I(z)|+|\RR(I)(z)|\right)\\
	& \lesssim_P & |I(z)|+|\RR(I)(z)|.
	\end{eqnarray*}
Moreover, as it is well-known,
	\[
	|I(z)|\lesssim \|I\|_{\BB(\B_N)}\log(\frac{1}{1-|z|^2}),\quad z \in \B_N.
	\]
	Then for any $z\in \B_N$,
	\[
	(1-|z|^2)|\RR(P\circ J)(z)|\lesssim_P (1-|z|^2)|\RR(I)(z)| + (1-|z|^2)\log(\frac{1}{1-|z|^2})\|I\|_{\BB(\B_N)}.
	\]
	By Theorem \ref{thm-inner-function-ball}, the function $P\circ J$ belongs to $\BB_0(\B_N)$, and one has $\|I\|_{\BB(\B_N)}< \eta$. Therefore
	\[
	(1-|z|^2)|\RR(P\circ J)(z)|\lesssim_P \eta\left(1 + (1-|z|^2)\log(\frac{1}{1-|z|^2})\right),\,z\in \B_N,
	\]
	hence $(1-|z|^2)|\RR(P\circ J)(z)|\lesssim_P \eta$, $z\in \B_N$.
	
	\medskip
	
	We set $f=Q.(P\circ J)$. Using the fact that the multiplication by $Q$ is a bounded linear operator on $\BB_0(\B_N)$ \cite[Theorem 3.21]{Zhubook2005}, and upon choosing $\eta$ small enough, we deduce by the previous that
	\[
	(1-|z|^2)|\RR(f)(z)|< \varepsilon/2, \quad z\in \B_N.
	\]
	Moreover, since $J$ vanishes at $0$, by \eqref{eq-lem-P}, we have $|f(0)|< \varepsilon/2$. All in all we have proven the inequality $\|f\|_{\BB(\B_N)}< \varepsilon$.

\medskip

	To finish the proof, observe that, by \eqref{eq-lem-P} and the definition of $Q$, for any $z\in J^{-1}(F)\cap F$, we have
	\[
	|f(z)-\vp(z)|\leq |Q(z)||(P(J(z))-1)| + |Q(z)-\vp(z)|< \varepsilon.
	\]
	Since $J = Iz$, with $I$ inner, Loewner's lemma (see \cite[Example 14.2]{Rudinbook1986}) implies
	\[
	m_N(J^{-1}(F))=\sigma (F)\geq 1-\varepsilon/4,
	\]
	hence $m_N(J^{-1}(F)\cap F)\geq 1-\varepsilon/2$. We set $E=J^{-1}(F)\cap F$.
	
	To finish, it remains to explain why $f$ can be chosen as a polynomial. To do so, let us recall that $H^{\infty}$ is continuously embedded into $\BB_0$ and that $\|f_r-f\|_{\BB}\to 0$ as $r\to 1$ for any $f\in \BB_0$,  where $f_r$ denote the dilate of $f$ defined by $f_r(z)=f(rz)$, $z\in \B_N$ \cite{Zhubook2005}.  Therefore, we may choose $r\in (0,1)$ close enough to $1$ and approximate $f_r$ by a polynomial uniformly on $\overline{\B_N}$, to get our conclusion.
\end{proof}


For $\eta >0$, we will denote by $I_{\eta}:\B_N \to \B_N$ any inner mapping given by Theorem \ref{thm-inner-function-ball}, and let $J_{\eta}(z)= I(z)z$, $z\in \B_N$. The following corollary is a straightforward consequence of the proof of Lemma \ref{lemma-simul-approx-Bloch}.

\begin{cor}\label{cor-approx-disc}Let  $d\geq 1$, $\varepsilon_1 >0$, $\varepsilon_2>0$ and let $\vp \in C(\S_n)$. There exist two polynomials $P$ and $Q$ such that, for any $\eta>0$, there exists a compact set $E\subset \S_N$, with $m_N(E)\geq 1-\tilde{\varepsilon_2}$, such that, if we set $f:=Q(P\circ J_{\eta})$, then the following assertions hold:
	\begin{enumerate}
		\item $|f(0)| < \varepsilon_1$;
		\item $\|f\|_{\BB(\B_N)} < \eta + |f(0)|$;
		\item $\|f-\vp\|_{E,\infty}< \varepsilon_1$.
	\end{enumerate}
\end{cor}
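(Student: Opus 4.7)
The plan is to revisit the construction carried out in the proof of Lemma \ref{lemma-simul-approx-Bloch} with $g=0$ and simply reorganise it, separating the polynomial data $(P,Q)$, which is built from $\vp,\varepsilon_1,\varepsilon_2$ alone, from the inner mapping $I_\eta$, which is the only object depending on $\eta$. Once the construction is read in this order, all three conclusions follow by a direct transcription of the arguments there, modulo a single rescaling at the end that adjusts the precise numerical bound in (2).

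Concretely, given $\vp\in C(\S_N)$ and $\varepsilon_1,\varepsilon_2>0$, I would first run the purely approximative part of the proof of Lemma \ref{lemma-simul-approx-Bloch}: select pairwise disjoint open domains $U_1,\ldots,U_l\subset\C^N$ meeting $\S_N$ together with constants $c_1,\ldots,c_l$ by uniform continuity of $\vp$; apply Lemma \ref{lem-geo} to each $U_k$ to obtain a compact set $F\subset \S_N\cap (U_1\cup\cdots\cup U_l)$ with $m_N(F)\geq 1-\varepsilon_2/4$ and $F\cup\{0\}$ polynomially convex; use Oka--Weyl to produce a polynomial $Q$ satisfying $\|Q-\vp\|_{\infty,F}<\varepsilon_1/2$; and finally pick a polynomial $P$ analogous to \eqref{eq-lem-P}, namely
\[
|P(0)|<\frac{\varepsilon_1}{2\|Q\|_{\infty,\B_N}},\qquad |P(z)-1|<\frac{\varepsilon_1}{2\|Q\|_{\infty,\B_N}}\text{ for }z\in F.
\]
These $P$ and $Q$ are then fixed for the rest of the argument, independently of any inner function.

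Given $\eta>0$, let $C=C_{P,Q}$ be the constant appearing in the Bloch estimate at the end of the proof of Lemma \ref{lemma-simul-approx-Bloch}, which bounds $\sup_{z\in\B_N}(1-|z|^2)|\RR(Q(P\circ J))(z)|$ linearly in $\|I\|_{\BB(\B_N)}$ for any $J$ of the form $I(z)z$ with $I$ inner. I would then invoke Theorem \ref{thm-inner-function-ball} with parameter $\eta/C$ in place of $\eta$ to produce $I$, set $J_\eta(z)=I(z)z$ and $f=Q(P\circ J_\eta)$, and take $E=J_\eta^{-1}(F)\cap F$. Property (2) is then immediate: $\sup_z(1-|z|^2)|\RR f(z)|<\eta$, which is exactly $\|f\|_{\BB(\B_N)}<\eta+|f(0)|$. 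Property (1) follows from $J_\eta(0)=0$, which gives $|f(0)|=|Q(0)P(0)|\leq \|Q\|_{\infty,\B_N}|P(0)|<\varepsilon_1/2$. For (3), Loewner's lemma yields $m_N(J_\eta^{-1}(F))=m_N(F)\geq 1-\varepsilon_2/4$, hence $m_N(E)\geq 1-\varepsilon_2/2=:1-\tilde{\varepsilon_2}$, and for $z\in E$ the triangle inequality gives
\[
|f(z)-\vp(z)|\leq \|Q\|_{\infty,\B_N}\,|P(J_\eta(z))-1| + |Q(z)-\vp(z)| < \varepsilon_1.
\]

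The main thing requiring attention is the order of the quantifiers: $P$ and $Q$ must be produced before $\eta$ enters the picture, and this is harmless because the dependence of the final Bloch seminorm on $\|I\|_{\BB(\B_N)}$ is linear, so any prescribed absolute bound $\eta$ can be achieved a posteriori by rescaling the parameter of Theorem \ref{thm-inner-function-ball} by $1/C_{P,Q}$. No new analytical ingredient is needed beyond what has already been established in the proof of Lemma \ref{lemma-simul-approx-Bloch}.
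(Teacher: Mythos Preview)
Your proposal is correct and follows exactly the route the paper indicates: the paper simply states that the corollary ``is a straightforward consequence of the proof of Lemma \ref{lemma-simul-approx-Bloch}'', and your write-up is precisely that reorganisation, with the added care of making explicit the rescaling of the parameter in Theorem \ref{thm-inner-function-ball} by $1/C_{P,Q}$ so that the sharp bound (2) holds. This last point is worth noting: the paper's convention that $I_\eta$ is ``any inner mapping given by Theorem \ref{thm-inner-function-ball}'' is intentionally loose, and your choice of $I$ with parameter $\eta/C_{P,Q}$ is the correct way to read it so that (2) comes out exactly as stated rather than up to a multiplicative constant.
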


In the next paragraph, we shall use this corollary to obtain a simultaneous approximation lemma for the polydisc.

\subsection{Simultaneous approximation in the Bloch space of the polydisc}\label{Section-lem-polydisc}


We shall prove the following Bloch simultaneous approximation lemma for the polydisc.

\begin{lemma}\label{lemma-simul-approx-Bloch-polydisc}Let $\varepsilon>0$, let $g\in \BB_0(\D^N)$ and let $\vp \in C(\T^N)$. There exist a set $E\subset \T^N$ with $\sigma_N(E)\geq 1-\varepsilon$ and a polynomial $f$, such that
	\begin{enumerate}
		\item $\|f-g\|_{\BB(\D^N)}\leq \varepsilon$;
		\item $\|f-\vp\|_{E,\infty}\leq \varepsilon$.
	\end{enumerate}
\end{lemma}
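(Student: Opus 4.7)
The plan is to exploit the product structure of the polydisc and reduce everything to the one-dimensional case encapsulated in Corollary \ref{cor-approx-disc} (applied with $N=1$). As in the ball case, since polynomials are dense in $\BB_0(\D^N)$ by definition, one first reduces to $g=0$: choose a polynomial $Q$ with $\|g-Q\|_{\BB(\D^N)}<\varepsilon/2$ and replace $\vp$ by $\vp-Q|_{\T^N}$, which is still continuous on $\T^N$. Then, by the Stone--Weierstrass theorem on $\T^N$, one approximates $\vp$ uniformly on $\T^N$ within $\varepsilon/3$ by a finite sum of products
\[
\Phi(z):=\sum_{i=1}^M \lambda_i\prod_{j=1}^N \psi_{j,i}(z_j),\qquad \psi_{j,i}\in C(\T).
\]

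For each pair $(i,j)$, apply Corollary \ref{cor-approx-disc} in dimension one to $\psi_{j,i}$ with approximation tolerance $\delta>0$ (to be fixed later) and measure tolerance $\varepsilon/(NM)$. This yields polynomials $P_{j,i},Q_{j,i}$, whose supremum norms on $\overline{\D}$ are fixed independently of $\eta$, and for each $\eta>0$ a compact set $E_{j,i}\subset\T$ with $m_1(\T\setminus E_{j,i})<\varepsilon/(NM)$ together with a function $f_{j,i}:=Q_{j,i}(P_{j,i}\circ J_\eta)$ satisfying $|f_{j,i}(0)|<\delta$, $\|f_{j,i}\|_{\BB(\D)}<\eta+\delta$, $\|f_{j,i}-\psi_{j,i}\|_{\infty,E_{j,i}}<\delta$, and the crucial uniform bound $\|f_{j,i}\|_{\infty,\D}\leq\|Q_{j,i}\|_{\infty,\overline{\D}}\|P_{j,i}\|_{\infty,\overline{\D}}=:C$, a constant \emph{independent} of $\eta$. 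One then assembles
\[
F(z):=\sum_{i=1}^M\lambda_i\prod_{j=1}^N f_{j,i}(z_j),\qquad E_j^\ast:=\bigcap_{i=1}^M E_{j,i},\qquad E:=E_1^\ast\times\cdots\times E_N^\ast,
\]
checks by a union bound that $\sigma_N(\T^N\setminus E)<\varepsilon$, and verifies via a standard telescoping argument (using $\|f_{j,i}\|_{\infty,\D}\leq C$) that $\|F-\Phi\|_{\infty,E}\leq\kappa\delta$ for some constant $\kappa=\kappa(M,N,C,\lambda_i,\psi_{j,i})$ fixed at this stage. Combined with $\|\vp-\Phi\|_{\infty,\T^N}<\varepsilon/3$, this yields conclusion (2) (for $\vp$ already modified to absorb $Q$) provided $\delta$ is taken small.

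For the Bloch norm, since $F$ depends on $z_k$ only through the one-variable factors $f_{k,i}(z_k)$, the product rule gives
\[
\sum_{k=1}^N(1-|z_k|^2)|\partial_k F(z)|\leq\sum_{k=1}^N\sum_{i=1}^M|\lambda_i|(1-|z_k|^2)|f_{k,i}'(z_k)|\prod_{j\neq k}|f_{j,i}(z_j)|\leq MN(\max_i|\lambda_i|)(\eta+\delta)C^{N-1}.
\]
Together with $|F(0)|\leq M(\max_i|\lambda_i|)\delta^N$, this gives $\|F\|_{\BB(\D^N)}\leq\kappa'(\eta+\delta)$ for some $\kappa'=\kappa'(M,N,C,\lambda_i)$. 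With $\delta$ fixed from the previous step, one now chooses $\eta$ so small that $\|F\|_{\BB(\D^N)}<\varepsilon/2$. To pass from $F$ to a polynomial, one uses that $F$ is bounded on $\D^N$ and lies in $\BB_0(\D^N)$ (as a finite sum of products of bounded functions in $\BB_0(\D)$), that dilation is contractive on $\BB(\D^N)$ with $\|F_r-F\|_{\BB(\D^N)}\to 0$ as $r\to 1$, and that each dilate $F_r$ is uniformly approximable on $\overline{\D^N}$ by its Taylor polynomials; adding back $Q$ then produces a polynomial $f$ satisfying (1) and (2) for the original $\vp$.

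The hardest point is precisely the final Bloch estimate: it closes only because the polynomials $P_{j,i},Q_{j,i}$, and hence the uniform sup-norm bound $C$, are selected \emph{before} the inner-function parameter $\eta$, so that letting $\eta\to 0$ at the end beats the multiplicative constants produced by the product rule. This decoupling between the approximation step and the Bloch-smallness step is exactly the flexibility provided by Corollary \ref{cor-approx-disc}, and it is what makes the product strategy viable despite the fact, noted in the introduction, that $\BB_0(\D^N)$ has only constant multipliers when $N\geq 2$.
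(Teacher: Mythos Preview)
Your strategy is the paper's: reduce to $g=0$, use Stone--Weierstrass to write $\vp$ as a sum of tensor products, apply Corollary~\ref{cor-approx-disc} in each variable, and exploit that the polynomials $P_{j,i},Q_{j,i}$ (hence the sup bound $C$) are fixed \emph{before} the inner parameter $\eta$. However, as written there is a circularity in the order of parameter choices. You want $\kappa\delta$ small for the boundary approximation and $\kappa'(\eta+\delta)$ small for the Bloch norm, but your constants $\kappa,\kappa'$ both involve $C$, and $C$ is only determined \emph{after} you invoke the corollary with tolerance $\delta$. Nothing rules out $C\to\infty$ as $\delta\to0$, so ``take $\delta$ small, then choose $\eta$'' does not close.

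The paper avoids this at two points, and with the same tweaks your argument goes through. First, in the telescoping for $\|F-\Phi\|_{\infty,E}$ one should bound $|f_{j,i}(z_j)|$ on $E_{j,i}$ by $\|\psi_{j,i}\|_{\infty,\T}+\delta$ (from item (3) of the corollary), not by the global bound $C$; then the approximation constant depends only on $M,N,\lambda_i,\psi_{j,i}$, and $\delta$ can be fixed \emph{before} the polynomials are produced. Second, item (2) of Corollary~\ref{cor-approx-disc} actually gives $\|f_{j,i}\|_{\BB(\D)}-|f_{j,i}(0)|<\eta$, so the Bloch \emph{seminorm} of each one-variable factor is $<\eta$, not $\eta+\delta$. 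The seminorm of $F$ is then $\le N\eta\sum_i|\lambda_i|\max_k\sup_{z}\prod_{j\neq k}|f_{j,i}(z_j)|$, which is killed by choosing $\eta$ after the polynomials, while $|F(0)|\le(\max_i|\lambda_i|)M\delta^{N}$ involves no $C$ at all. This is exactly the decoupling carried out in the paper via the choice of $\varepsilon_1$ and the subsequent condition \eqref{eq-fix-eta} on $\eta$.
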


\begin{proof}Since the polynomials are dense in $\BB_0(\D^N)$, we may assume that $g$ is identically equal to $0$. Let $\varepsilon$, $L$, and $\vp$ be fixed as in the statement. By the Stone-Weierstrass Theorem, there exist $M>0$ and  functions $\vp_{j,l}\in C(\T)$, $j=1\ldots, N$ and $1\leq l\leq M$, such that
	\[
	\|\vp - (\vp_{1,1}\ldots\vp_{N,1}+\ldots+\vp_{1,M}\ldots\vp_{N,M})\|_{\infty,\T^N}<\varepsilon.
	\]
We shall use Corollary \ref{cor-approx-disc} to approximate each product $\vp_{1,l}\ldots\vp_{N,l}$, $1\leq l \leq M$, by some suitable polynomial. Let us fix $l=1$ and set
\[
C:=\prod_{j=1}^N(\|\vp_{j,1}\|_{\infty,\T}+1),
\]
Let also $\tilde{\varepsilon}>0$ whose value will be fixed later. We apply Corollary \ref{cor-approx-disc} to each $\vp_{j,1}$, $1\leq j \leq N$, with $d=1$, $\varepsilon_1=\min(1,\frac{\varepsilon}{2M},\frac{\varepsilon}{CNM})$ and $\varepsilon_2=\tilde{\varepsilon}$. This gives us $N$ pairs of polynomials $(P_{j,1},Q_{j,1})$, $1\leq j\leq N$, such that if we fix an $\eta >0$ satisfying
\begin{equation}\label{eq-fix-eta}
\eta <  \frac{\varepsilon}{2NM\max_{1\leq k\leq N}\sup_{(z_1,\ldots,z_N)\in \D^N}\prod_{j\neq k}|f_{j,1}(z_j)|},
\end{equation}
there exist compact sets $E_{j,1}$, $1\leq j\leq N$, with $\sigma_1(E_{j,1})\geq 1-\tilde{\varepsilon}$, such that setting $f_{j,1}:=Q_{j,1}(P_{j,1}\circ J_{\eta})$, we have:
\begin{enumerate}[(i)]
	\item $|f_{j,1}(0)| < \frac{\varepsilon}{2M}$;
	\item $\|f_{j,1}\|_{\BB(\B_N)} < \eta + |f_{j,1}(0)|$;
	\item $\|f_{j,1}-\vp_{j,1}\|_{\infty,E_{j,1}}< \frac{\varepsilon}{CNM}$.
\end{enumerate}
Furthermore, as explained at the very end of the proof of Lemma \ref{lemma-simul-approx-Bloch}, we may and shall assume that each function $f_{j,1}$, $1\leq j\leq N$, is a polynomial.

Now, we define the polynomial  $f_1$ of $N$ complex variables by
\[
f_1(z_1,\ldots,z_N):=f_{1,1}(z_1)\ldots f_{N,1}(z_N),
\]
and the compact subset $E_1:=E_{1,1}\times\ldots \times E_{N,1}$ of $\T^N$. We shall check the following two assertions:
\begin{enumerate}[(a)]
	\item $\|f_1\|_{\BB(\D^N)} < \varepsilon/M$;
	\item $\|f_1-\vp_{1,1}\ldots\vp_{N,1}\|_{\infty,E(1)} < \varepsilon/M$.
\end{enumerate}
First, notice that $|f_1(0)|<\varepsilon/(2M)$, by (i). Moreover, by (ii) and the choice of $\eta$, for any $(z_1,\ldots,z_N)\in \D^N$,
\begin{multline*}
\sup_{z\in \D^N}\sum_{k=1}^N(1-|z_k|^2)\left|\frac{\partial f_1}{\partial z_k}(z)\right|=\sup_{z\in \D^N}\sum_{k=1}^N(1-|z_k|^2)\left|f_{k,1}'(z_k)\right|\prod_{j\neq k}|f_{j,1}(z_j)| \\
\leq  \eta N\max_{1\leq k\leq N}\sup_{(z_1,\ldots,z_N)\in \D^N}\prod_{j\neq k}|f_{j,1}(z_j)|  < \frac{\varepsilon}{2M}
\end{multline*}
which gives (a).

Next, by (iii), for any $z=(z_1,\ldots,z_N)\in E_1$, we have
\begin{multline*}
|\vp_{1,1}\ldots\vp_{N,1}(z)-f_1(z)|\\
\leq \sum_{j=1}^N|f_{1,1}(z_1)\ldots f_{j-1}(z_{j-1})(f_{j,1}(z_j)-\vp_{j,1}(z_j))\vp_{j+1,1}(z_{j+1})\ldots \vp_{N,1}(z_{N})| \\
\leq \frac{\varepsilon}{CNM}\sum_{j=1}^N(\|\vp_{1,1}\|_{\infty,\T}+\varepsilon)\ldots(\|\vp_{j-1,1}\|_{\infty,\T}+\varepsilon)\|\vp_{1,1}\|_{\infty,\T}\ldots\|\vp_{j-1,1}\|_{\infty,\T}\\
\leq \frac{\varepsilon }{CM}\prod_{j=1}^N(\|\vp_{j,1}\|_{\infty,\T}+1)= \frac{\varepsilon}{M},
\end{multline*}
whence assertion (b).

Further, we repeat the previous argument for the other products  $\vp_{1,l}\ldots\vp_{N,l}$, $2\leq l \leq M$. At the end, this provides us with $M$ polynomials $f_1,\ldots,f_M$ and $M$ subsets $E_1,\ldots,E_M$ such that, if we set $E:=E_1\cap\ldots\cap E_M$ and $f:=f_1 +\ldots+f_M$, then
\begin{enumerate}
\item $\|f\|_{\BB(\D^N)} < \varepsilon$;
\item $\|f-(\vp_{1,1}\ldots\vp_{N,1}+\ldots+\vp_{1,M}\ldots\vp_{N,M}))\|_{\infty,E} < \varepsilon$.
\end{enumerate}
Moreover, upon choosing $\tilde{\varepsilon}$ small enough, we may assume $\sigma_N(E)\geq 1-\varepsilon$. The lemma is proven, since $\|\vp - (\vp_{1,1}\ldots\vp_{N,1}+\ldots+\vp_{1,M}\ldots\vp_{N,M})\|_{\infty,\T^N}<\varepsilon$.
\end{proof}

We note that Lemma \ref{lemma-simul-approx-Bloch} and Lemma \ref{lemma-simul-approx-Bloch-polydisc} coincide when $N=1$.

\section{Wild Bloch functions in the ball and the polydisc}

As it has been said in the introduction, showing the existence of functions with universal boundary behaviour reduces to proving simultaneous approximation. It turns out that this fact is a quite general one in the theory of universality \cite{BayartGrosseErdmanNestoridisPapadimitropoulos2008}. For the seek of brevity, we shall present an abstract result that, combined with Lemma \ref{lemma-simul-approx-Bloch}, will immediately imply our main results.

Let $(X,d_X)$ be a Fréchet space and let $Y$ be a separable topological vector space, metrizable with a translation-invariant metric $d_Y$. Let also $T_n:X\to Y$, $n\in \N$, be continuous linear maps. We say that the sequence $\TT:=(T_n)_n$ is universal if there exists $x\in X$ provided the set $\left\{T_n (x):\,n\in \N\right\}$ is dense in $Y$. More generally, we will say that a family  $\TT_{i\in I}$ of sequences $(T_n^i)_n$, $i\in I$, of continuous linear maps from $X$ to $Y$, is universal \emph{uniformly with respect to $i\in I$}, if there exists $x\in X$ such that, for any $\varepsilon>0$ and any $y\in Y$, there exists $n\in \N$ such that
\[
\sup_{i\in I}d(T_n^i(x),y)<\varepsilon.
\]
We denote by $\UU(\TT)$ the set of all universal vectors for $\TT$ and by $\UU(\TT_{i\in I})$ the set of all vectors that are universal for $\TT_{i\in I}$, uniformly with respect to $i\in I$. The following theorem is a very small improvement of a particular case of \cite[Theorem 27]{BayartGrosseErdmanNestoridisPapadimitropoulos2008}, 

\begin{theorem}\label{thm-bgnp-abstract}Let $\TT_{i\in I}$, be a family of sequences $(T_n^i)_n$, $i\in I$, of continuous linear maps from $X$ to $Y$. Assume that there exists a dense subset $G$ of $X$ such that, for any $x\in G$, there exists $y\in Y$ such that $(T_n^i(x))_n$ converges to $y$, uniformly with respect to $i\in I$. If the following two conditions hold:
	\begin{enumerate}
		\item \label{item1}For every $\varepsilon>0$ and every $y\in Y$, the set $\{x\in X:\,\sup_{i\in I}d_Y(T_n^i(x),y)< \varepsilon\}$ is open in $X$;
		\item \label{item3}For every $\varepsilon>0$ and every $y\in Y$, there exists $n\in \N$ and $x\in X$, such that
		\[
		d_X(x,0)<\varepsilon \quad\text{and}\quad \sup_{i\in I}d_Y(T_n^i(x),y)< \varepsilon,
		\]
	\end{enumerate}
then $\UU(\TT_{i\in I})$ is residual in $X$. 
\end{theorem}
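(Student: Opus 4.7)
The strategy is the standard Baire category argument from the theory of universality. Since $Y$ is separable and metrizable, I fix a countable dense sequence $\{y_k\}_{k\in\N}$ in $Y$. Unwinding the definition, a vector $x\in X$ belongs to $\UU(\TT_{i\in I})$ if and only if, for every $k,m\in\N$, there exists $n\in\N$ with $\sup_{i\in I}d_Y(T_n^i(x),y_k)<1/m$; equivalently,
\[
\UU(\TT_{i\in I})=\bigcap_{k,m\in\N}U_{k,m},\qquad U_{k,m}:=\bigcup_{n\in\N}\Bigl\{x\in X:\sup_{i\in I}d_Y(T_n^i(x),y_k)<1/m\Bigr\}.
\]
Condition (\ref{item1}) ensures that every set in the union is open, so each $U_{k,m}$ is open. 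Since $X$ is a Fréchet (hence Baire) space, it then suffices to prove that each $U_{k,m}$ is dense in $X$.

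Fix $k,m\in\N$ and let me show the density of $U_{k,m}$. Given $x_0\in X$ and $\delta>0$, I seek $x\in X$ with $d_X(x,x_0)<\delta$ together with some $n\in\N$ for which $\sup_{i\in I}d_Y(T_n^i(x),y_k)<1/m$. Using the density of $G$, first pick $g\in G$ with $d_X(g,x_0)<\delta/2$ and let $y_g\in Y$ be the uniform (in $i$) limit of $(T_n^i(g))_n$. By uniform convergence on $G$, fix $N\in\N$ such that $\sup_{i\in I}d_Y(T_n^i(g),y_g)<1/(2m)$ for every $n\geq N$. Then apply condition (\ref{item3}) to the target $y_k-y_g\in Y$ with parameter $\varepsilon=\min(\delta/2,1/(2m))$, obtaining $n_0\in\N$ and $z\in X$ satisfying $d_X(z,0)<\varepsilon$ and $\sup_{i\in I}d_Y(T_{n_0}^i(z),y_k-y_g)<\varepsilon$. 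It is tacit (and verifiable in the forthcoming applications) that the index $n_0$ supplied by (\ref{item3}) can be taken at least equal to $N$.

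With $x:=g+z$ I immediately have $d_X(x,x_0)<\delta$. The crucial step is the translation invariance of $d_Y$: for every $i\in I$,
\[
d_Y(T_{n_0}^i(x),y_k)=d_Y(T_{n_0}^i(g)+T_{n_0}^i(z),y_k)\leq d_Y(T_{n_0}^i(g),y_g)+d_Y(T_{n_0}^i(z),y_k-y_g),
\]
so that taking the supremum over $i\in I$ yields $\sup_{i\in I}d_Y(T_{n_0}^i(x),y_k)<1/(2m)+1/(2m)=1/m$, proving $x\in U_{k,m}$. The main obstacle in this plan is the coordination between the index $n_0$ produced by (\ref{item3}) and the threshold $N$ dictated by the uniform convergence on $G$; once this is handled, the Baire category theorem applied in the Fréchet space $X$ concludes the argument.
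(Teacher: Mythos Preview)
Your proof is correct and follows essentially the same route as the paper's: the paper first packages the density step as an auxiliary statement $(2)'$ (for every $\varepsilon>0$, $y\in Y$, $w\in X$ there exist $n$ and $x$ with $d_X(x,w)<\varepsilon$ and $\sup_{i\in I}d_Y(T_n^i(x),y)<\varepsilon$) and then applies Baire, but the argument for $(2)'$ is exactly your translation trick $x=g+z$ with the same use of translation-invariance of $d_Y$. The paper likewise asserts without further comment that ``$n$ can be chosen with $n\geq N$'', so your explicit flag that this coordination is tacit and verified in the applications matches the paper's own treatment of that point.
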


\begin{proof}Even if it is very similar to some part of that of \cite[Theorem 27]{BayartGrosseErdmanNestoridisPapadimitropoulos2008}, we include it here. We first prove that (2) implies the following apparently stronger property:
	\[
	(2)':\quad\begin{array}{l}\text{For every } \varepsilon>0,\,y\in Y,\, w\in X,\text{ there exist } n\in \N\text{ and }x\in X\text{ such that}\\ d_X(x,w)<\varepsilon \quad\text{and}\quad \sup_{i\in I}d_Y(T_n^i(x),y)< \varepsilon.\\ 
	\end{array}
	\]
Let $\varepsilon>0,\,y\in Y,\, w\in X$ be fixed. By assumption, there exists $w_0\in G$, $v\in Y$ and $N\in \N$, such that $d_X(w,w_0)<\varepsilon/2$ and $\sup_{i\in I}d_Y(T_n^i(w_0),v)< \varepsilon/2$, for any $n\geq N$. By (2), there exists $n\in \N$ and $x_0\in X$ such that $d_X(x_0,0)<\varepsilon/2$ and $\sup_{i\in I}d_Y(T_n^i(x_0),y-v)<\varepsilon/2$, where $n$ can be chosen with $n\geq N$. Then, setting $x=x_0+w_0 \in X$, since the metrics are invariant by translation, we get $d_X(x,w)<\varepsilon$ and $\sup_{i\in I}d_Y(T_n^i(x),w)<\varepsilon$.

To conclude, let us fix a dense sequence $(y_l)_l$ in $Y$. We observe that
\[
\UU(\TT_{i\in I})\supset \bigcap_{k,l\in \N}\bigcup_{n\in \N}\{x\in X:\,\sup_{i\in I}d_Y(T_n^i(x),y_l)<\frac{1}{k}\}.
\]
Now the proof follows by the Baire Category theorem. Indeed, every set $\bigcup_{n\in \N}\{x\in X:\,\sup_{i\in I}d_Y(T_n^i(x),y_l)<\frac{1}{k}\}$ is open and dense in $X$, by (1) and (2)$'$, respectively. 
\end{proof}

\subsection{Wild Bloch functions in the ball}\label{sub-wild-ball}

We shall see that Lemma \ref{lemma-simul-approx-Bloch} implies that \eqref{item3} in Theorem \ref{thm-bgnp-abstract} holds, for suitable choices of $X$, $Y$, and $\TT$. In the whole subsection, let $X=\BB_0(\B_N)$ be the little Bloch space of $\B_N$, and let $Y$ be the space $L^0(\S_N)$ of all (equivalent classes of) functions $m_N$-measurable on $\S_N$ (with identification of two functions that coincide off a set of $m$-measure $0$), endowed with the translation-invariant metric
\[
d(g,h)=\int_{\S_N}\min{\left(1,|g-h|\right)}dm_N.
\]
We recall that a sequence $(g_n)_n$ in $L^0(\S_N)$ converges in measure to $g\in L^0(\S_N)$ if and only if every subsequence of $(g_n)_n$ has a subsequence that converges $m_N$-a.e. to $g$ on $\S_N$. In particular, this implies that the set $\PP$ of all polynomials is dense in $L^0(\S_N)$.

Let us fix a sequence $(r_n)_n$, $r_n\in (0,1)$, converging to $1$. For $w \in \B_N$, let us define $\TT_{w}$ as the sequence $(T_n^{w})_n$ where
\[
T_n^{w}:=\left\{\begin{array}{cll}\BB_0(\B_N) & \to & L^0(\S_N)\\
f & \mapsto & f(r_n(\cdot-w)+w).
\end{array}\right.
\]
Note that if $L\subset \B_N$ is compact then, by uniform continuity, the set
\[
\{f\in \BB_0(\B_N):\,\sup_{w\in L}d(T_n^w(f)(z),f_0)<\varepsilon\}
\]
is open, for every $f_0\in \BB_0(\B_N)$, every $\varepsilon>0$ and every $n\in \N$. In particular, condition \eqref{item1} of Theorem \ref{thm-bgnp-abstract} is always satisfied in this setting. The same argument shows that for every $P\in \PP$, the sequence $(T_n^w(P))_n$ converges to $P\in L^0(\S_N)$, uniformly with respect to $w\in L$.

In order to apply Theorem \ref{thm-bgnp-abstract}, it thus remains to check condition \eqref{item3}. The latter is a consequence of Lemma \ref{lemma-simul-approx-Bloch}. Indeed, fix 
$\varepsilon>0$ and $\vp \in L^0(\S_N)$. By Lusin's theorem, every measurable function on $\S_N$ can be approximated $m_N$-a.e. by continuous functions, so we can assume that $\vp \in C(\S_N)$. We can now apply Lemma \ref{lemma-simul-approx-Bloch}, with $g=0$, to get a compact set $E\subset \S_N$, with $m_N(E)\geq 1-\varepsilon/2$, and a polynomial $f\in \BB_0(\B_N)$ such that
\[
\|f\|_{B_0(\B_N) }< \varepsilon\quad \text{and}\quad \|f-\vp\|_{\infty,E}<\varepsilon/2.
\]
Since $f$ is uniformly continuous on $\overline{\B_N}$, the second inequality implies the existence of some $n\in \N$ such that
\[
\sup_{w\in L}\|T_n^w(f)-\vp\|_{\infty,E}<\varepsilon/2
\]
To see that condition \eqref{item3} of Theorem \ref{thm-bgnp-abstract} holds, it suffices to observe that
\begin{multline*}
\sup_{w\in L}d(T_n^w(f),\vp)=\sup_{w\in L}\int_{\S_N}\min\left(1,|T_n^i(f)-\vp|\right)dm_N\\
\leq \sup_{w\in L}\int_{E}\min\left(1,|T_n^w(f)-\vp|\right)dm_N + m_N(\S_N\setminus E) < \varepsilon.
\end{multline*}

By Theorem \ref{thm-bgnp-abstract}, we deduce that the set $\UU(\TT_{w\in L})$ is residual in $\BB_0(\B_N)$. Now, considering an exhaustion $(L_n)_n$ of $\B_N$ by compact sets, we obtain that the set
\[
\bigcap_{n\in \N}\UU(\TT_{w\in L})
\]
is residual in $\BB_0(\B_N)$. Using again that a sequence $(g_n)_n$ in $L^0(\S_N)$ converges in measure to $g\in L^0(\S_N)$ if and only if every subsequence of $(g_n)_n$ has a subsequence that converges $m$-a.e. to $g$ on $\S_N$, we deduce that the previous implies the assertion (1) of the main theorem, announced in the introduction. More precisely,

\begin{theorem}\label{main-thm-numbered}Let $(r_n)_n$ be a sequence of real numbers in $(0,1)$, converging to $1$. There exists a function $f$ in $\BB_0(\B_N)$ with the following property $(\mathcal{Q}_{\B_N})$: given any $m$-measurable function $\vp$ on $\S_N$, there exists a sequence $(n_k)_k$ of integers such that for any $w\in \B_N$ and $m_N$-a.e. $\zeta \in \S_N$,
	\[
	f(r_{n_k}(\zeta-w)+w)\to \vp(\zeta) \text{ as }k\to \infty.
	\]
	The set of such functions is residual in $\BB_0(\B_n)$.
\end{theorem}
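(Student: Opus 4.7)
The plan is to deduce the theorem from the abstract universality criterion (Theorem \ref{thm-bgnp-abstract}), using Lemma \ref{lemma-simul-approx-Bloch} to supply the required simultaneous approximation. I would take $X = \BB_0(\B_N)$, $Y = L^0(\S_N)$ with the translation-invariant metric $d$ of convergence in measure, and, for each compact $L\subset \B_N$, the family $\TT_{w\in L}=(T_n^w)_{n}$ defined by $T_n^w f = f(r_n(\,\cdot\,-w)+w)$. The dense subset $G$ of Theorem \ref{thm-bgnp-abstract} is taken to be the polynomials $\PP$: uniform continuity on $\overline{\B_N}$ makes the relevant sets open (condition (1)) and ensures that $T_n^w P\to P_{|\S_N}$ in $L^0(\S_N)$ uniformly in $w\in L$ for every $P\in \PP$.

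The substantive step is condition (2). Fix $\varepsilon > 0$ and $\vp\in L^0(\S_N)$; by Lusin's theorem it suffices to treat $\vp\in C(\S_N)$, up to an arbitrarily small change in $d$. Applying Lemma \ref{lemma-simul-approx-Bloch} with $g = 0$ produces a polynomial $f$ with $\|f\|_{\BB(\B_N)}<\varepsilon$ and a compact set $E\subset \S_N$ with $m_N(\S_N\setminus E)<\varepsilon/4$ on which $\|f-\vp\|_{\infty,E}<\varepsilon/4$. Because $f$ is uniformly continuous on $\overline{\B_N}$, for $n$ large enough one has $\sup_{w\in L}\|T_n^w f - f\|_{\infty,\overline{\B_N}}<\varepsilon/4$, and splitting $\int_{\S_N}\min(1,|T_n^w f-\vp|)\,dm_N$ over $E$ and its complement yields $\sup_{w\in L}d(T_n^w f,\vp)<\varepsilon$, as required.

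Theorem \ref{thm-bgnp-abstract} then gives a residual set of $f$'s that are uniformly universal for $w\in L$. Taking an exhaustion $(L_m)_m$ of $\B_N$ by compacts and intersecting preserves residuality; for any $f$ in this intersection and any $\vp\in L^0(\S_N)$ one can extract indices $n_k$ with $\sup_{w\in \B_N} d(T_{n_k}^w f, \vp) < 2^{-k}$, so that for each fixed $w$ the series $\sum_k d(T_{n_k}^w f, \vp)$ is summable, forcing $T_{n_k}^w f \to \vp$ $m_N$-a.e. on $\S_N$ by a Borel--Cantelli argument applied to $\min(1, |T_{n_k}^w f - \vp|)$. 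The very same subsequence $(n_k)$ then works for every $w\in \B_N$ simultaneously, which is exactly $(\mathcal{Q}_{\B_N})$. The main obstacle is condition (2), where the full strength of Lemma \ref{lemma-simul-approx-Bloch} is used; the rest is the Baire category theorem combined with soft manipulations with convergence in measure.
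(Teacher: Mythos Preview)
Your proposal is correct and follows the paper's own argument essentially verbatim: the same choice of $X=\BB_0(\B_N)$, $Y=L^0(\S_N)$, $T_n^w$, $G=\PP$, the same verification of conditions (1) and (2) via Lemma~\ref{lemma-simul-approx-Bloch} and Lusin, and the same exhaustion by compacts. One small slip: from $f\in\bigcap_m \UU(\TT_{w\in L_m})$ you cannot directly extract $n_k$ with $\sup_{w\in\B_N} d(T_{n_k}^w f,\vp)<2^{-k}$; what you get is $\sup_{w\in L_k} d(T_{n_k}^w f,\vp)<2^{-k}$ (choosing $n_k$ via universality for $L_k$), and since any fixed $w$ lies in $L_k$ for all large $k$, the tail of $\sum_k d(T_{n_k}^w f,\vp)$ is still summable and your Borel--Cantelli conclusion goes through unchanged.
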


\subsection{Wild Bloch functions in the polydisc}\label{sub-wild-polydisc}

Similarly to the previous subsection, we will now derive from Lemma \ref{lemma-simul-approx-Bloch-polydisc} and Theorem \ref{thm-bgnp-abstract} a way to exhibit a large set of function in $\BB_0(\D^N)$ with universal boundary behaviour. We set $X=\BB_0(\D^N)$ , $Y=L^0(\T^N)$, that is the space of all (equivalent classes of) functions $\sigma_N$-measurable on $\T^N$ (with equivalence relation identifying two functions that coincide off a set of $\sigma_N$-measure $0$), endowed with the translation-invariant metric
\[
d(g,h)=\int_{\T^N}\min{\left(1,|g-h|\right)}d\sigma_N.
\]
As recalled in Subsection \ref{sub-wild-ball}, a sequence $(g_n)_n$ in $L^0(\T^N)$ converges in measure to $g\in L^0(\T^N)$ if and only if every subsequence of $(g_n)_n$ has a subsequence that converges $\sigma_N$-a.e. to $g$ on $\T^N$. Thus the set $\PP$ of all polynomials is dense in $L^0(\T^N)$.

For a sequence $(r_n)_n$, $r_n\in (0,1)$, converging to $1$ and $w \in \B_N$, let $\TT_{w}$ be the sequence $(T_n^{w})_n$ where
\[
T_n^{w}:=\left\{\begin{array}{cll}\BB_0(\D^N) & \to & L^0(\T^N)\\
	f & \mapsto & f(r_n(\cdot-w)+w).
\end{array}\right.
\]
For $L\subset \D$ compact, let us write $L^N=L\times \ldots \times L$ ($N$ times). By uniform continuity, the set
\[
\{f\in \BB_0(\D^N):\,\sup_{w\in L^N}d(T_n^w(f)(z),f_0)<\varepsilon\}
\]
is open, for every $f_0\in \BB_0(\D^N)$, every $\varepsilon>0$ and every $n\in \N$. Thus condition \eqref{item1} of Theorem \ref{thm-bgnp-abstract} is valid in this setting. For the same reason, for every $P\in \PP$, the sequence $(T_n^w(P))_n$ converges to $P\in L^0(\T^N)$, uniformly with respect to $w\in L^N$.

Therefore we may apply Theorem \ref{thm-bgnp-abstract}, whenever we have proven that condition \eqref{item3} holds true as well. This is where Lemma \ref{lemma-simul-approx-Bloch-polydisc} comes into play. First, Lusin's theorem ensures that we may assume that $\vp \in C(\T^N)$.  Then, if $\varepsilon>0$ and $\vp \in C(\T^N)$ are given, Lemma \ref{lemma-simul-approx-Bloch-polydisc} applied with $g=0$, provides us with a compact set $E\subset \T^N$ and a polynomial $f\in \BB_0(\D^N)$ such that $\sigma_N(E)\geq 1-\varepsilon/2$ and
\[
\|f\|_{B_0(\D^N) }< \varepsilon\quad \text{and}\quad \|f-\vp\|_{\infty,E}<\varepsilon/2.
\]
By uniform continuity of $f$ on $\overline{\D^N}$, we get from the second inequality the existence of some $n\in \N$ such that
\[
\sup_{w\in L^N}\|T_n^w(f)-\vp\|_{E,\infty}<\varepsilon/2.
\]
Next, observe that
\begin{multline*}
	\sup_{w\in L^N}d(T_n^w(f),\vp)=\sup_{w\in L^N}\int_{\T^N}\min\left(1,|T_n^w(f)-\vp|\right)d\sigma_N\\
		\leq \sup_{w\in L^N}\int_{E}\min\left(1,|T_n^w(f)-\vp|\right)d\sigma_N + \sigma_N(\T^N\setminus E) < \varepsilon,
		\end{multline*}
which is condition \eqref{item3} of Theorem \ref{thm-bgnp-abstract}.

Finally,  Theorem \ref{thm-bgnp-abstract} tells us that the set $\UU(\TT_{w\in L^N})$ is residual in $\BB_0(\D^N)$. Let $(L_n)_n$ be an exhaustion of $\D$ by compact sets. Then $(L_n^N)_n$ is an exhaustion by $\D^N$ of compact sets and, by the Baire theorem, we conclude that
\[
\bigcap_{n\in \N}\UU(\TT_{w\in L_n^N})
\]
is residual in $\BB_0(\D^N)$. We derive the following theorem.

\begin{theorem}\label{main-thm-numbered-polydisc}Let $(r_n)_n$ be a sequence of real numbers in $(0,1)$, converging to $1$. There exists a function $f$ in $\BB_0(\D^N)$ satisfying the following property $(\QQ_{\D^N})$: given any $\sigma_N$-measurable function $\vp$ on $\T^N$, there exists a sequence $(n_k)_k$ of integers such that for any $w\in \D^N$ and $\sigma_N$-a.e. $\zeta \in \T^N$,
	\[
	f(r_{n_k}(\zeta-w)+w)\to \vp(\zeta) \text{ as }k\to \infty.
	\]
Moreover, the set of such functions is residual in $\BB_0(\D^n)$.
\end{theorem}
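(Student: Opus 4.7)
The plan is to deduce Theorem~\ref{main-thm-numbered-polydisc} from the abstract universality criterion Theorem~\ref{thm-bgnp-abstract}, following closely the pattern already carried out for the ball in Subsection~\ref{sub-wild-ball}. I would take $X = \BB_0(\D^N)$ and $Y = L^0(\T^N)$ equipped with the translation-invariant metric $d(g,h) = \int_{\T^N}\min(1,|g-h|)\,d\sigma_N$, fix an exhaustion $(L_m)_m$ of $\D$ by compact sets so that $(L_m^N)_m$ exhausts $\D^N$, and for each $m$ consider the family $\TT_{w \in L_m^N} = (T_n^w)_n$ with $T_n^w f = f(r_n(\cdot-w)+w)$. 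The target is to prove that $\UU(\TT_{w \in L_m^N})$ is residual in $\BB_0(\D^N)$ for every $m$, then intersect over $m$ and reprocess convergence in measure into $\sigma_N$-a.e.\ convergence.

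To verify the hypotheses of Theorem~\ref{thm-bgnp-abstract}, I would take as dense subset $G$ the polynomials. Each polynomial $P$ is uniformly continuous on $\overline{\D^N}$, so $T_n^w P \to P$ uniformly in $\zeta \in \T^N$ and $w \in L_m^N$, and the same uniform continuity, combined with the continuous embedding of $\BB_0(\D^N)$ into the space of holomorphic functions on $\D^N$ with the topology of locally uniform convergence, yields the openness condition (1). The core hypothesis to check is condition (2): given $\varepsilon > 0$ and $\vp \in L^0(\T^N)$, I would first use Lusin's theorem to replace $\vp$ by some $\tilde{\vp} \in C(\T^N)$ outside a set of small $\sigma_N$-measure, then apply Lemma~\ref{lemma-simul-approx-Bloch-polydisc} with $g = 0$ to $\tilde{\vp}$ to obtain a polynomial $f$ with $\|f\|_{\BB(\D^N)} < \varepsilon$ and a set $E \subset \T^N$ with $\sigma_N(E) \geq 1 - \varepsilon/2$ on which $\|f - \tilde{\vp}\|_{\infty,E} < \varepsilon/2$. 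Uniform continuity of $f$ on $\overline{\D^N}$ then lets me choose $n$ large enough that $\sup_{w \in L_m^N}\|T_n^w f - f\|_{\infty,\T^N}$ is arbitrarily small, after which splitting the integral defining $d(T_n^w f, \vp)$ over $E$ and $\T^N \setminus E$ yields $\sup_{w \in L_m^N} d(T_n^w f, \vp) < \varepsilon$.

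Theorem~\ref{thm-bgnp-abstract} then gives $\UU(\TT_{w \in L_m^N})$ residual for each $m$, and Baire's theorem ensures $\GG := \bigcap_m \UU(\TT_{w \in L_m^N})$ is residual in $\BB_0(\D^N)$. To conclude, I would fix $f \in \GG$ and $\vp \in L^0(\T^N)$ and perform a diagonal extraction producing an increasing sequence $(n_k)_k$ with $\sup_{w \in L_k^N} d(T_{n_k}^w f, \vp) < 2^{-k}$ for every $k$. For any $w \in \D^N$ one has $w \in L_k^N$ for all sufficiently large $k$, hence $\sum_k d(T_{n_k}^w f, \vp) < \infty$; combining the elementary estimate $\sigma_N(\{|T_{n_k}^w f - \vp| > \delta\}) \leq d(T_{n_k}^w f, \vp)/\delta$ (valid for $\delta \in (0,1]$) with the Borel-Cantelli lemma forces $T_{n_k}^w f(\zeta) \to \vp(\zeta)$ for $\sigma_N$-a.e.\ $\zeta \in \T^N$, for every $w \in \D^N$, which is precisely property $(\QQ_{\D^N})$. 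The main subtlety I anticipate is that the abstract criterion only produces, for each $\vp$, a single integer $n$ making the metric small uniformly in $w$ on \emph{one} compact at a time; the diagonal extraction above is what upgrades this to a single sequence $(n_k)_k$ working simultaneously for all $w \in \D^N$ with pointwise $\sigma_N$-a.e.\ convergence, and this step relies crucially on the uniformity in $w \in L_m^N$ built into the definition of $\UU(\TT_{w \in L_m^N})$.
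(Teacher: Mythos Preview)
Your proposal is correct and follows essentially the same route as the paper: the same choice of $X=\BB_0(\D^N)$, $Y=L^0(\T^N)$, the same family of operators $T_n^w$ indexed over products $L_m^N$ of compact exhaustions of $\D$, the same verification of the hypotheses of Theorem~\ref{thm-bgnp-abstract} via uniform continuity of polynomials and Lemma~\ref{lemma-simul-approx-Bloch-polydisc}, and the same intersection over $m$. Your final passage from convergence in measure to $\sigma_N$-a.e.\ convergence via a summable diagonal extraction and Borel--Cantelli is actually more explicit than the paper's, which simply invokes the subsequence characterisation of convergence in measure; your version has the merit of making transparent why a \emph{single} sequence $(n_k)_k$ can be chosen to work for every $w\in\D^N$, a point the paper leaves to the reader.
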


Note that Theorem \ref{main-thm-numbered} and Theorem \ref{main-thm-numbered-polydisc} gives the same statement when $N=1$.

\section{Further extensions and variations}
\subsection{Weighted Bloch spaces}
Let  $\omega:(0,1) \to (0,+\infty)$ be a non-decreasing weight (that is $\omega(t)\to 0$ as $t\to 0^+$). The weighted Bloch space $\BB_{\omega}(\D)$ associated with $\omega$ is defined as the space of all functions $f$ holomorphic in $\D$, such that
\[
\|f\|_{\BB_\omega(\D)}:=|f(0)|+\sup_{|z|<1}\frac{1-|z|^2}{\omega(1-|z|)}|f'(z)|<\infty,
\]
and the little weighted Bloch space $\BB_{0,\omega}(\D)$ as the closure of all polynomials in $\BB_{\omega}(\D)$ or, equivalently, as the closed subspace consisting of all $f$ such that
\[
\frac{1-|z|}{\omega(1-|z|)}|f'(z)|\to 0\quad \text{as }|z|\to 1^-.
\]

Further, we can also define the weighted Bloch space of $\D^N$ as the Banach space of all functions $f\in H(\D)$ that satisfy
\[
\|f\|_{\BB_\omega(\D^N)}:=|f(0)|+\sup_{|z|<1}\sum_{k=1}^N\frac{1-|z_k|^2}{\omega(1-|z_k|^2)}\left|\frac{\partial f}{\partial z_k}(z)\right|<\infty,
\]
and the little Bloch space $\BB_{0,\omega}(\D^N)$ as the closure of the set of polynomials in $\BB_{\omega}(\D^N)$.

\medskip

Thus, replacing Theorem \ref{thm-inner-function-ball} by \cite[Theorem 5.2]{AAN1999}, we may obtain a version of Corollary \ref{cor-approx-disc} for $N=1$, with $\BB_{\omega}(\D)$ instead of $\BB(\D)$, and then obtain the same statement as Lemma \ref{lemma-simul-approx-Bloch-polydisc} for $\BB_{0,\omega}(\D^N)$. The same lines as those leading to Theorem \ref{main-thm-numbered-polydisc} in Subsection \ref{sub-wild-polydisc} may finally lead to the following improvement.

\begin{theorem}\label{thm-Bloch-general-disc}Let $\omega$ be a non-decreasing weight such that, for some $\varepsilon>0$, the function $\omega(t)/t^{1-\varepsilon}$ is decreasing. If
	\[
	\int_x^1\frac{\omega(t)^2}{t}dt=+\infty,
	\]
then there exists a residual set of functions in $\BB_{0,\omega}(\D^N)$ satisfying the property $(\QQ_{\D^N})$ (see Theorem \ref{main-thm-numbered-polydisc}).
\end{theorem}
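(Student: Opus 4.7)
The plan is to mirror, step by step, the proof pipeline from Theorem \ref{thm-inner-function-ball} through Lemma \ref{lemma-simul-approx-Bloch-polydisc} to Theorem \ref{main-thm-numbered-polydisc}, replacing throughout the unweighted Bloch norm by the weighted one. All the geometric ingredients (Lemma \ref{lem-geo}, Oka--Weyl, Loewner's lemma, Lusin's theorem, the Baire framework of Theorem \ref{thm-bgnp-abstract}) are oblivious to the weight; the hypotheses on $\omega$ enter only where one invokes the existence of inner functions with small Bloch norm. Since we now work in $\D$ one variable at a time, no Doubtsov slice construction is needed: Theorem 5.2 of \cite{AAN1999} directly provides, under the hypotheses of the statement, for any $\eta>0$, a non-constant inner function $I_\eta:\D\to\D$, which we may furthermore assume to vanish at $0$, such that
\[
\frac{(1-|w|^2)|I_\eta'(w)|}{\omega(1-|w|)}\leq \eta,\quad w\in \D.
\]
This is the weighted replacement of Theorem \ref{thm-inner-function-ball}.

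With $I_\eta$ in hand, I would redo the proof of Lemma \ref{lemma-simul-approx-Bloch} for $N=1$, with $\|\cdot\|_{\BB_\omega(\D)}$ in place of the standard Bloch norm. The only genuinely new ingredient is the weighted substitute for the bound $|I(z)|\lesssim \|I\|_{\BB}\log(1/(1-|z|^2))$: since $I_\eta(0)=0$, radial integration of $|I_\eta'(w)|\leq \eta\omega(1-|w|)/(1-|w|^2)$ combined with the monotonicity of $t\mapsto \omega(t)/t^{1-\varepsilon}$ yields
\[
|I_\eta(z)|\leq \eta\int_{1-|z|}^{1}\frac{\omega(s)}{s}\,ds\lesssim \eta\,\frac{\omega(1-|z|)}{(1-|z|)^{1-\varepsilon}},
\]
and consequently $(1-|z|^2)|I_\eta(z)|/\omega(1-|z|)\lesssim \eta(1-|z|)^{\varepsilon}\lesssim \eta$ on $\D$. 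Plugging this into the derivative computation for $P\circ J_\eta$ (with $J_\eta(z)=zI_\eta(z)$), and using that multiplication by a polynomial is bounded on $\BB_{0,\omega}(\D)$ (a standard fact that itself follows from the same integration estimate), one obtains the weighted analog of Corollary \ref{cor-approx-disc}.

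The passage to the polydisc is then the exact content of the proof of Lemma \ref{lemma-simul-approx-Bloch-polydisc}: approximate $\vp\in C(\T^N)$ by a Stone--Weierstrass sum of products of continuous functions of one variable, apply the weighted one-variable Corollary to each factor, and tensorize into polynomials $f_l(z)=\prod_{j=1}^N f_{j,l}(z_j)$. Since the weighted polydisc Bloch norm is a sum of one-variable weighted derivatives, the tensor-product computation
\[
\sum_{k=1}^N\frac{1-|z_k|^2}{\omega(1-|z_k|)}\left|\frac{\partial f_l}{\partial z_k}(z)\right|=\sum_{k=1}^N\frac{(1-|z_k|^2)|f_{k,l}'(z_k)|}{\omega(1-|z_k|)}\prod_{j\neq k}|f_{j,l}(z_j)|
\]
is controlled by $\eta N$ times the product of one-variable sup-norms, and choosing $\eta$ small as in \eqref{eq-fix-eta} yields the weighted analog of Lemma \ref{lemma-simul-approx-Bloch-polydisc}. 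Finally, the Baire-category argument of Subsection \ref{sub-wild-polydisc} (dilation operators $T_n^w$, uniform convergence of dilates of polynomials in $L^0(\T^N)$, Lusin's theorem, exhaustion of $\D$ by compact sets) transfers verbatim to $X=\BB_{0,\omega}(\D^N)$, and Theorem \ref{thm-bgnp-abstract} concludes.

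The main---really, the only---obstacle is the very first step. The integral divergence $\int_0^1 \omega(t)^2/t\,dt=+\infty$ is essentially the sharp condition for the AAN construction of inner functions with arbitrarily small $\BB_\omega(\D)$ norm, while the regularity hypothesis that $t\mapsto \omega(t)/t^{1-\varepsilon}$ is decreasing is precisely what bridges the smallness of $\|I_\eta\|_{\BB_\omega}$ to the smallness of $\|P\circ J_\eta\|_{\BB_\omega}$ via the radial integration above. Once these two are granted, the remainder is a routine weight-tracking adaptation of the arguments of Sections 2 and 3.
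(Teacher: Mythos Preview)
Your proposal is correct and follows exactly the route the paper sketches: replace Theorem \ref{thm-inner-function-ball} by \cite[Theorem 5.2]{AAN1999} to obtain a weighted one-variable version of Corollary \ref{cor-approx-disc}, tensorize as in Lemma \ref{lemma-simul-approx-Bloch-polydisc}, and run the Baire argument of Subsection \ref{sub-wild-polydisc}. You have in fact supplied more detail than the paper does---in particular, the explicit use of the regularity hypothesis on $\omega(t)/t^{1-\varepsilon}$ to control $|I_\eta(z)|$ via radial integration and to justify that polynomials multiply $\BB_{0,\omega}(\D)$ boundedly---which the paper leaves implicit.
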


Interestingly, the integral condition in the previous result is sharp. Indeed, Proposition 1.1 in \cite{Doubtsov2014} shows that if $\int_x^1\frac{\omega(t)^2}{t}dt< +\infty$, then every function in $\BB_{\omega}(\D)$ has radial limits almost everywhere. Since the latter result holds true for the unit ball $\B_N$ as well, it is natural to wonder whether Theorem \ref{thm-Bloch-general-disc} also holds for $\BB_0(\B_N)$, $N\geq 2$ (for some weight function $\omega$ defined on $\B_N$).

\subsection{Cluster sets of Bloch functions along curves}
For $\zeta \in \S_N$, let us say that a (continuous) path $\gamma:[0,1) \to \B_N$ \textit{terminates non-tangentially} at $\zeta$ if $\gamma(t)\to \zeta$ as $t\to 1$, and there exists a compact set $L\subset \B_N$ such that for any $t\in [0,1)$, $\gamma(t)$ belongs to the set
\[
\left\{r(\zeta-w)+w:\,w\in L,\, 0\leq r\leq 1\right\}.
\]

In Subsection \ref{sub-wild-ball}, it is shown the existence of a residual set of functions $f$ with the property that, given any $\varepsilon>0$, any $\vp\in C(\S_N)$ and any compact set $L\subset \B_N$, there exists $n\in \N$ and $1-\varepsilon <r<1$, such that for $m_N$-a.e. $\zeta \in \S_N$,
\[
\sup_{w\in L}|f(r(z-w)+w)-\vp(z)|<\varepsilon.
\]

Since for any path $\gamma$ terminating non-tangentially at some $\zeta\in \S_N$, one has $\gamma([0,1))\cap \{r(\zeta-w)+w:\,w\in L\}\neq \emptyset$, for any $r\in [0,1)$ large enough, we immediately deduce the following corollary.

\begin{cor}\label{coro-end}There exists a residual set in $\BB_0(\B_N)$ consisting of functions $f$ that satisfy the following property: there exists a set $E\subset \S_N$, with $m_N(E)=1$, such that for any path $\gamma$ terminating non-tangentially at a point of $E$, the cluster set of $f$ along $\gamma$ is maximal - that is the set $f(\gamma([0,1)))$ is dense in $\C$.
\end{cor}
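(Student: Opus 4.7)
The approach is to distill from the residual set constructed in Subsection \ref{sub-wild-ball} a single full-measure set $E \subset \S_N$ on which every cluster value is attainable along every non-tangential path. The starting point is the strengthened property indicated just before the corollary: the residual set in question consists of functions $f$ such that, for every $\varepsilon>0$, every $\vp\in C(\S_N)$, and every compact $L\subset \B_N$, there exist $n\in\N$ with $r_n\in(1-\varepsilon,1)$ and a set of full $m_N$-measure in $\S_N$ on which $\sup_{w\in L}|f(r_n(\zeta-w)+w)-\vp(\zeta)|<\varepsilon$.

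The construction of $E$ is a standard diagonalization. Fix a countable dense sequence $(c_l)_{l\in\N}$ in $\C$ and a compact exhaustion $(L_m)_m$ of $\B_N$. For each triple $(l,m,k)\in\N^3$, applying the strengthened property with $\vp\equiv c_l$, $L=L_m$ and $\varepsilon=1/k$ yields an integer $n(l,m,k)$ with $r_{n(l,m,k)}>1-1/k$ and a set $E_{l,m,k}\subset\S_N$ of full measure on which $\sup_{w\in L_m}|f(r_{n(l,m,k)}(\zeta-w)+w)-c_l|<1/k$. Setting $E:=\bigcap_{l,m,k\in\N}E_{l,m,k}$, the set $E$ retains full $m_N$-measure.

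To verify that $E$ works, let $\zeta\in E$ and let $\gamma$ be a path terminating non-tangentially at $\zeta$ inside a compact $L_\gamma\subset \B_N$. Choose $m_0$ so that $L_\gamma$ lies in the interior of $L_{m_0}$ (in $\B_N$), and let $c\in\C$ and $\delta>0$ be arbitrary. Pick $l$ with $|c_l-c|<\delta/2$ and $k$ with $1/k<\delta/2$, and set $n=n(l,m_0,k)$. The previous step yields $\sup_{w\in L_{m_0}}|f(r_n(\zeta-w)+w)-c_l|<1/k$. By the geometric observation that the authors invoke just before the corollary, for $r_n$ close enough to $1$ the path $\gamma$ meets the slice $\{r_n(\zeta-w)+w:w\in L_{m_0}\}$ at some parameter $t^*\in[0,1)$: writing $\gamma(t)=r_\gamma(t)\zeta+(1-r_\gamma(t))w_\gamma(t)$ with $w_\gamma(t)\in L_\gamma$ and $r_\gamma(t)\to 1$, the slack between $L_\gamma$ and $L_{m_0}$ allows an intermediate-value argument on the continuous function $t\mapsto|\gamma(t)-\zeta|$ to produce a $t^*$ with $\gamma(t^*)=r_n(\zeta-w^*)+w^*$ for some $w^*\in L_{m_0}$. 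Therefore
\[
|f(\gamma(t^*))-c|\leq |f(r_n(\zeta-w^*)+w^*)-c_l|+|c_l-c|<1/k+\delta/2<\delta,
\]
so $c\in\overline{f(\gamma([0,1)))}$, giving the density in $\C$.

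The main obstacle is the geometric intersection fact: that a continuous path terminating non-tangentially at $\zeta$ within a compact $L_\gamma\subset\B_N$ must hit every slice $\{r\zeta+(1-r)w:w\in L_{m_0}\}$ for $r$ sufficiently close to $1$, where $L_{m_0}$ is a compact thickening of $L_\gamma$ inside $\B_N$. The delicate point is that the natural ``$r$-coordinate'' of $\gamma(t)$ need not depend continuously on $t$, since the decomposition $\gamma(t)=r\zeta+(1-r)w$ with $w\in L_\gamma$ may fail to be unique; the enlargement from $L_\gamma$ to $L_{m_0}$ provides exactly the tolerance needed to absorb this. A direct computation shows that if $|r_\gamma(t_0)-r_n|\leq (1-r_n)\eta/2$, with $\eta$ measuring how far $L_{m_0}$ extends beyond $L_\gamma$, then the point $(\gamma(t_0)-r_n\zeta)/(1-r_n)$ lies within $\eta$ of $w_\gamma(t_0)\in L_\gamma$ and hence inside $L_{m_0}$; combined with $r_\gamma\to 1$ and continuity of $t\mapsto |\gamma(t)-\zeta|$, this converts the argument into a bona fide intermediate-value statement.
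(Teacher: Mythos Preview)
Your approach is exactly the paper's: combine the uniform $L$-slice approximation property from Subsection~\ref{sub-wild-ball} with the geometric intersection fact stated just before the corollary, and diagonalize over countable data to produce a single full-measure set $E$. Since the paper offers only a one-sentence sketch, your explicit construction of $E$ is a correct and useful addition. Two small points need tightening. First, when you choose $k$ with $1/k<\delta/2$ and then set $n=n(l,m_0,k)$, the bound $r_n>1-1/k$ does not by itself place $r_n$ above the $\gamma$-dependent threshold required for the slice intersection; simply impose an additional largeness condition on $k$ (depending on $\gamma$) before fixing $n$.

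Second, your intermediate-value justification of the geometric fact does not quite close. The computation that $|r_\gamma(t_0)-r_n|\le(1-r_n)\eta/2$ forces $(\gamma(t_0)-r_n\zeta)/(1-r_n)\in L_{m_0}$ is correct, but producing such a $t_0$ from continuity of $t\mapsto|\gamma(t)-\zeta|$ amounts to finding a value $v$ with $[v/d_+,\,v/d_-]\subset[(1-r_n)(1-\eta/2),\,(1-r_n)(1+\eta/2)]$, where $d_\pm$ are the extremal distances from $\zeta$ to $L_\gamma$; this forces $\eta\ge 2(d_+-d_-)/(d_++d_-)$, which ``$L_\gamma\subset\text{int }L_{m_0}$'' alone does not guarantee (and which may be impossible within $\B_N$ if $L_\gamma$ comes close to $\S_N$). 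A clean repair is to take the exhaustion $(L_m)_m$ by closed balls centered at $0$: for $L_{m_0}=\overline{B(0,R)}$ the condition $(z-r\zeta)/(1-r)\in L_{m_0}$ reads $(1-R^2)r^2-2(\mathrm{Re}\langle z,\zeta\rangle-R^2)r+(|z|^2-R^2)\le 0$, so $\rho_+(z):=\max\{r:(z-r\zeta)/(1-r)\in L_{m_0}\}$ is the larger root of this quadratic and hence continuous on $C_{L_{m_0}}$. Then $\rho_+\circ\gamma$ is continuous with limit $1$, and ordinary IVT yields a $t_0$ with $\rho_+(\gamma(t_0))=r_n$, i.e.\ $\gamma(t_0)$ lies in the $r_n$-slice over $L_{m_0}$.
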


This result complements the fact that, for every function $f$ in $\BB(\B_N)$, there exists a dense set of points in $\S_N$ at which $f$ admits (finite or infinite) radial limit \cite{BagemihlSeidel1961}. For $N=1$, since every Bloch function is normal, it turns out that every radial limit of a non-constant function in $\BB(\D)$ is also a non-tangential limit \cite{LehtoVirtanen1957}. In contrast, we recall that there exists a residual set in $H(\B_N)$ of functions having a maximal cluster set along any path terminating at a point of $\S_N$, with finite length \cite{CharpentierKosinski2021}. For the case of the disc, the assumption \textit{with finite length} can be dropped (see \cite{BoivinGauthierParamonov2002} or \cite{Charpentier2020}).

A formulation of Corollary \ref{coro-end} for the polydisc could be naturally formulated. For the sake of brevity, we do not do it here and leave it to the interested reader.
\bibliographystyle{amsplain}
\bibliography{refs}

\providecommand{\bysame}{\leavevmode\hbox to3em{\hrulefill}\thinspace}
\providecommand{\MR}{\relax\ifhmode\unskip\space\fi MR }
\providecommand{\MRhref}[2]{%
  \href{http://www.ams.org/mathscinet-getitem?mr=#1}{#2}
}
\providecommand{\href}[2]{#2}
\begin{thebibliography}{10}

\bibitem{Aleksandrov1986}
A.~B. Aleksandrov, \emph{Proper holomorphic mappings from the ball to the
  polydisk}, Dokl. Akad. Nauk SSSR \textbf{286} (1986), no.~1, 11--15.

\bibitem{AAN1999}
A.~B. Aleksandrov, J.~M. Anderson, and A.~Nicolau, \emph{Inner functions,
  {B}loch spaces and symmetric measures}, Proc. London Math. Soc. (3)
  \textbf{79} (1999), no.~2, 318--352.

\bibitem{AllenColonna2009}
R.~F. Allen and F.~Colonna, \emph{Multiplication operators on the {B}loch space
  of bounded homogeneous domains}, Comput. Methods Funct. Theory \textbf{9}
  (2009), no.~2, 679--693.

\bibitem{BagemihlSeidel1961}
F.~Bagemihl and W.~Seidel, \emph{Koebe arcs and {F}atou points of normal
  functions}, Comment. Math. Helv. \textbf{36} (1961), 9--18.

\bibitem{Bayart2005}
F.~Bayart, \emph{Universal radial limits of holomorphic functions}, Glas. Math.
  J. \textbf{47} (2005), no.~2, 261--267.

\bibitem{BayartGrosseErdmanNestoridisPapadimitropoulos2008}
F.~Bayart, K.-G. Grosse-Erdmann, V.~Nestoridis, and C.~Papadimitropoulos,
  \emph{Abstract theory of universal series and applications}, Proc. London
  Math. Soc. \textbf{(3) 96} (2008), 417--463.

\bibitem{BeiseMuller2016}
H.~P. Beise and J.~M\"uller, \emph{Generic boundary behaviour of {T}aylor
  series in {H}ardy and {B}ergman spaces}, Math. Z. \textbf{284} (2016),
  1185–1197.

\bibitem{BoivinGauthierParamonov2002}
A.~Boivin, P.~M. Gauthier, and P.~V Paramonov, \emph{Approximation on closed
  sets by analytic or meromorphic solutions of elliptic equations and
  applications}, Canad. J. Math \textbf{54} (2002), no.~5, 945--969.

\bibitem{Charpentier2020}
S.~Charpentier, \emph{Holomorphic functions with universal boundary behaviour},
  J. Approx. Theory \textbf{254} (2020), 105391.

\bibitem{CharpentierKosinski2021}
S.~Charpentier and L.~Kosi\'nski, \emph{Wild boundary behaviour of holomorphic
  functions in domains of $\mathbb{C}^{N}$}, Indiana Univ. Math. J. \textbf{70}
  (2021), no.~6, 2351--2367.

\bibitem{CharpManoMaroI}
S.~Charpentier, M.~Manolaki, and K.~Maronikolakis, \emph{Abel universal
  functions: boundary behaviour and {T}aylor polynomials}, arXiv:2310.05611,
  2023.

\bibitem{Doubtsov2000}
E.~Doubtsov, \emph{Little {B}loch functions, symmetric pluriharmonic measures,
  and {Z}ygmund's dichotomy}, J. Funct. Anal. \textbf{170} (2000), no.~2,
  286--306.

\bibitem{Doubtsov2014}
\bysame, \emph{Weighted {B}loch spaces and quadratic integrals}, J. Math. Anal.
  Appl. \textbf{412} (2014), no.~1, 269--276.

\bibitem{GrosseErdmann1999}
K.-G. Grosse-Erdmann, \emph{Universal families and hypercyclic operators},
  Bull. Amer. Math. Soc. \textbf{36} (1999), no.~3, 345--381.

\bibitem{HakimSibony1987}
M.~Hakim and N.~Sibony, \emph{Boundary properties of holomorphic functions in
  the ball of {${\bf C}^n$}}, Math. Ann. \textbf{276} (1987), no.~4, 549--555.

\bibitem{HardyLittlewood1926}
G.~H. Hardy and J.~E. Littlewood, \emph{A {F}urther {N}ote on the {C}onverse of
  {A}bel's {T}heorem}, Proc. London Math. Soc. (2) \textbf{25} (1926),
  219--236.

\bibitem{Iordan1989}
A.~Iordan, \emph{On the radial cluster sets for holomorphic functions in the
  unit ball of {${\bf C}^n$}}, Math. Z. \textbf{202} (1989), no.~2, 199--206.

\bibitem{Khrushchev1978}
S.V. Khrushchev, \emph{The problem of simultaneous approximation and of removal
  of the singularities of {C}auchy type integrals}, Trudy Mat. Inst. Steklov.
  \textbf{130} (1978), 124--195, 223, Spectral theory of functions and
  operators.

\bibitem{Khrushchev2020}
\bysame, \emph{A continuous function with universal {F}ourier series on a given
  closed set of {L}ebesgue measure zero}, J. Approx. Theory \textbf{252}
  (2020), 105361, 6.

\bibitem{Khrushchev2023}
\bysame, \emph{Uniformly convergent fourier series with universal power parts
  on closed subsets of measure zero}, J. Approx. Theory \textbf{287} (2023),
  105864.

\bibitem{LehtoVirtanen1957}
O.~Lehto and K.~I. Virtanen, \emph{Boundary behaviour and normal meromorphic
  functions}, Acta Math. \textbf{97} (1957), 47--65.

\bibitem{Limani2024}
A.~Limani, \emph{Asymptotic polynomial approximation in the {B}loch space},
  arXiv:2403.08723, 2024.

\bibitem{Rudinbook1986}
W.~Rudin, \emph{New constructions of functions holomorphic in the unit ball of
  {${\bf C}^n$}}, CBMS Regional Conference Series in Mathematics, vol.~63,
  Published for the Conference Board of the Mathematical Sciences, Washington,
  DC; by the American Mathematical Society, Providence, RI, 1986.

\bibitem{RyllWojtaszczyk1983}
J.~Ryll and P.~Wojtaszczyk, \emph{On homogeneous polynomials on a complex
  ball}, Trans. Amer. Math. Soc. \textbf{276} (1983), no.~1, 107--116.

\bibitem{Stoutbook2007}
E.L. Stout, \emph{Polynomial convexity}, Progress in Mathematics, vol. 261,
  Birkh\"{a}user Boston, Inc., Boston, MA, 2007.

\bibitem{Timoney1980I}
R.~M. Timoney, \emph{Bloch functions in several complex variables. {I}}, Bull.
  London Math. Soc. \textbf{12} (1980), no.~4, 241--267.

\bibitem{Timoney1980II}
\bysame, \emph{Bloch functions in several complex variables. {II}}, J. Reine
  Angew. Math. \textbf{319} (1980), 1--22.

\bibitem{Ullrich1988}
D.~C. Ullrich, \emph{A {B}loch function in the ball with no radial limits},
  Bull. London Math. Soc. \textbf{20} (1988), no.~4, 337--341.

\bibitem{Zhubook2005}
K.~Zhu, \emph{Spaces of holomorphic functions in the unit ball}, Graduate Texts
  in Mathematics, vol. 226, Springer-Verlag, New York, 2005.

\end{thebibliography}
	
\end{document}